\documentclass{article}
\usepackage[utf8]{inputenc}
\usepackage{authblk}
\usepackage{setspace}
\usepackage[margin=1.25in]{geometry}
\usepackage{graphicx}
\graphicspath{ {./figures/} }
\usepackage{subcaption}
\usepackage{amsmath}
\usepackage{amssymb}
\usepackage{amsthm}
\usepackage{thmtools}
\usepackage{tikz}
\usepackage{tikz-cd}
\usepackage[utf8]{inputenc}
\usepackage{bbold}
\usepackage[shortlabels]{enumitem}
\usepackage[hidelinks]{hyperref}      
\usepackage[nameinlink]{cleveref}   
\usepackage{csquotes}
\MakeOuterQuote{"}

\usetikzlibrary{patterns}
\tikzset{
pattern size/.store in=\mcSize, 
pattern size = 5pt,
pattern thickness/.store in=\mcThickness, 
pattern thickness = 0.3pt,
pattern radius/.store in=\mcRadius, 
pattern radius = 1pt}
\makeatletter
\pgfutil@ifundefined{pgf@pattern@name@redstripes}{
\pgfdeclarepatternformonly[\mcThickness,\mcSize]{redstripes}
{\pgfqpoint{0pt}{-\mcThickness}}
{\pgfpoint{\mcSize}{\mcSize}}
{\pgfpoint{\mcSize}{\mcSize}}
{
\pgfsetcolor{\tikz@pattern@color}
\pgfsetlinewidth{\mcThickness}
\pgfpathmoveto{\pgfqpoint{0pt}{\mcSize}}
\pgfpathlineto{\pgfpoint{\mcSize+\mcThickness}{-\mcThickness}}
\pgfusepath{stroke}
}}
\makeatother

\newcommand{\floor}[1]{\left\lfloor #1 \right\rfloor}
\newcommand{\ceil}[1]{\left\lceil #1 \right\rceil}
\newcommand{\prs}[1]{\left( #1 \right)}
\newcommand{\crb}[1]{\left\{ #1 \right\}}

\tikzset{
pattern size/.store in=\mcSize, 
pattern size = 5pt,
pattern thickness/.store in=\mcThickness, 
pattern thickness = 0.3pt,
pattern radius/.store in=\mcRadius, 
pattern radius = 1pt}
\makeatletter
\pgfutil@ifundefined{pgf@pattern@name@_7la7libyc}{
\pgfdeclarepatternformonly[\mcThickness,\mcSize]{_7la7libyc}
{\pgfqpoint{0pt}{0pt}}
{\pgfpoint{\mcSize+\mcThickness}{\mcSize+\mcThickness}}
{\pgfpoint{\mcSize}{\mcSize}}
{
\pgfsetcolor{\tikz@pattern@color}
\pgfsetlinewidth{\mcThickness}
\pgfpathmoveto{\pgfqpoint{0pt}{0pt}}
\pgfpathlineto{\pgfpoint{\mcSize+\mcThickness}{\mcSize+\mcThickness}}
\pgfusepath{stroke}
}}
\makeatother
\tikzset{every picture/.style={line width=0.75pt}}

\theoremstyle{plain}
\newtheorem{theorem}{Theorem}[section]
\newtheorem{lemma}[theorem]{Lemma}
\newtheorem{proposition}[theorem]{Proposition}
\newtheorem{corollary}[theorem]{Corollary}
\newtheorem*{theorem*}{Theorem}

\theoremstyle{definition}
\newtheorem{definition}[theorem]{Definition}
\newtheorem{example}[theorem]{Example}

\theoremstyle{remark}
\newtheorem{remark}[theorem]{Remark}

\usepackage[backend=bibtex, style=alphabetic, citestyle=alphabetic]{biblatex}
\newcommand{\Z}{\mathbb{Z}}
\newcommand{\N}{\mathbb{N}}

\newcommand{\Q}{\mathbb{Q}}
\newcommand{\di}{dimension}

\title{Collapsibility in Multiparametric Models of Random Simplicial Complexes}

\author{Jon V. Kogan\footnote{Department of Mathematics, Hebrew University, Jerusalem 91904, Israel. e-mail:
jonatan.kogan$@$mail.huji.ac.il. Supported by ERC grant 3012006831 and ISF grant 3011003773}}

\date{\vspace{-5ex}}

\begin{document}

\maketitle


\begin{abstract}
We study collapsibility in the multiparametric models of random simplicial complexes, namely the lower and upper models.
In the upper model, we improve upon a result of Farber and Nowik, and assert that the homology  is a.a.s concentrated in a single dimension by proving that the complex collapses to that \di.
In the lower model, we prove that the complex a.a.s collapses to the \di\ with maximal non-trivial cohomology. We then compare this threshold to the ones derived previously for the special cases of the clique complex (by Kahle) and the Linial-Meshulam model.

\end{abstract}

\section{Introduction}
\subsection{Background}
Random simplicial complexes form a probabilistic framework for studying topology, extending the classical Erd\H{o}s--R\'enyi random graph model to higher dimensions. 
Foundational examples include the Linial--Meshulam complex $Y_d(n,p)$ (with the 2-\di al version first appearing in \cite{LinialMeshulam}, and in \cite{MeshulamWallach} for general $d$), in which $d$-simplexes are independently added to a complete $(d-1)$-skeleton, and the random clique complex $X(n,p)$, obtained by taking the clique (also called flag) complex of $G(n,p)$ (first appearing in \cite{KAHLE09}). 
Since then, extensive work has been devoted to understanding how topological invariants such as connectivity, homology, and collapsibility depend on the underlying probability parameters; see, for example, \cite{MeshulamWallach,LinialPeled16,AronshtamLinialLM13,MALEN23}. 

These classical models, as well as most others at the time, derive their randomness from faces of a single dimension. To capture interactions between simplexes of different dimensions, Costa and Farber introduced the multiparametric models of random simplicial complexes in \cite{CostaFarber} and \cite{FarberMeadNowik19}:
\begin{definition}\label{defLMM}
Consider a random hypergraph $X(n;p_1,p_2,...)$ composed of independently chosen facets of each \di , where facets of \di \ $i$ are chosen with probability $p_i$.
We may then convert this distribution on hypergraphs to two distributions on simplicial complexes:

Sample a hypergraph $H$ from $X(n;p_1,p_2,...)$ and let $\underline{X}$ be the  maximal simplicial complex contained in it. 
The resulting distribution on simplicial complexes is called the \textbf{Lower Multiparametric Model} or the \textbf{Lower Model} for short, and denoted by $\underline{X}(n;p_1,p_2,...)$.

If, instead, we consider the distribution on $\overline{H}$, the minimal simplicial complex containing $H$, we denote this distribution by
$\overline{X}(n;p_1,p_2,...)$ and call it the \textbf{Upper Multiparametric Model} (or the \textbf{Upper Model} for short).
 \end{definition}
  These may be thought of in the following way:
  $\underline{X}(n;p_1,p_2,...)$ is the distribution on simplicial complexes with $n$ vertices, where each edge is included independently with probability $p_1$, each triangle whose boundary formed in the first step is filled independently with probability $p_2$ and so on.
  $\overline{X}(n;p_1,p_2,...)$, may be thought of in the same way, but without requiring the boundary to exist before attaching a simplex.

 $\underline{X}(n;p_1,p_2,...)$ interpolates between the clique complex and the Linial-Meshulam-Wallach model (with parameters $(p,1,1,...),(1,1,...,1,p_r,0,...)$ respectively), while $\overline{X}(n;p_1,p_2,...)$ is its obvious "dual".

In this paper we wish to study the \emph{collapsibility} of these models. A \textbf{collapse} is the erasing of a pair of faces $\sigma\subset\tau$, where $\sigma$ is a free face of $\tau$.
A complex $C$ \textbf{collapses to \di\ $d$} if there exists a sequence of collapses changing $C$ to a complex of \di\ at most $d$ (see \Cref{def:collapse} for the precise formulation).
A collapse is in particular a combinatorially tractable version of a homotopy equivalence, shrinking the size of the complex while preserving all homotopical invariants (such as homology).

Collapsibility has been extensively studied in the two classical models of random complexes mentioned above. In \cite{AronshtamLinialLM13} it was proven that the threshold for collapsibility of $Y_d(n,p)$ to \di\ $d-1$ is of the form $p=c/n$. Interestingly, this threshold occurs for a constant  strictly smaller than that for the appearance of $d$-dimensional homology (see \cite{LinialPeled16}). 

The analogous result regarding the clique complex $X(n,p)$ is that it collapses onto \di\ $k-1$ for $p=n^{-a},a>1/k$, proven in \cite{MALEN23}.

In this paper, we wish to establish similar collapsibility results for the multiparametric models. Such a result already exists for the upper model, but is not tight. 
We recall the insightful notation from \cite{FarberNowik23} in \Cref{FNNotation}. It will be useful both in formulating their theorem, as well as for the proofs in this paper. We also denote by $E(A\subset X)$ the expected number of appearances of $A$ as a subcomplex of a random complex $X$, or simply $E(A)$ if $X$ is clear from context (see \Cref{defPartial} for the precise definition).
\begin{definition}\label{FNNotation}
    In $\overline{X}(n;p_1,p_2,...)$, where $p_i=n^{-\alpha_i}$:
    \begin{itemize}
        \item $\beta_i:=i+1-\alpha_i\sim \log_n E(\Delta^i\subset X)$, $\beta:=\max\{\beta_i\}$, $l=\lfloor\beta \rfloor$.
        \item $\gamma_k:=\max_{i\ge k}\{\beta_i\}\sim \log_n E(\Delta^k\subset \overline{X})$.
        \item $\nu_k:=2\gamma_k-k$,  $l':=\max\{k| \nu_k\ge 0\}\le \lfloor 2\beta \rfloor\le 2l+1$.
        \item $e_k:=\gamma_k-k=\nu_k-\gamma_k <0$ where the inequality is when $k>l$. $e_{k+1}\le e_k-1$.
    \end{itemize}
    (some facts are included for intuition's sake).
    \end{definition}
Formulated in this notation, Farber and Nowik proved the following:
\begin{theorem}\label{FNResults}
    Let $p_i=n^{-\alpha_i}$ with $p_j=0$ when $j>r$, and $\crb{\alpha_i}_{i=1}^r$ such that $\beta\notin \Z$. Then $\overline{X}(n;p_1,p_2,...)$ a.a.s has a full $(l-1)$-skeleton, collapses onto \di \ $l'$,
    has $H_i(\overline{X};\Z)=0,i<l$,
    and has $\dim(H_l(\overline{X};\Q))=\theta(n^\beta)$. Further, a.a.s $\dim(H_k(\overline{X};\Q))\le \omega(n) n^{\nu_k}$ for $k>l$ and any $\omega(n)\xrightarrow{n\to \infty} \infty$.
\end{theorem}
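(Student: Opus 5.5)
This theorem collects several separate claims about the upper model: the full $(l-1)$-skeleton, collapse onto \di\ $l'$, vanishing of $H_i$ for $i<l$, the order of $\dim H_l$, and the bounds on $\dim H_k$ for $k>l$. I would prove them in that order, each by a first- or second-moment computation on counts of small configurations in $\overline{X}(n;p_1,p_2,\dots)$, using throughout that $E(\Delta^k\subset\overline{X})\asymp n^{\gamma_k}$.

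\textbf{Skeleton and low homology.} A fixed $k$-simplex lies in $\overline{X}$ exactly when some $i$-facet containing it is sampled, for some $i\ge k$. When $k\le l-1$, there is an $i\ge l$ with $\beta_i=\beta>l$, and the simplex has $\binom{n-k-1}{i-k}\asymp n^{i-k}$ potential cofaces of dimension $i$. Its probability of being missing is therefore at most $(1-p_i)^{cn^{i-k}}=\exp(-cn^{\beta_i-k-1})=\exp(-cn^{\beta-k-1})$. Since $\beta-k-1>0$, a union bound over the $n^{k+1}$ simplices still gives $o(1)$. So the $(l-1)$-skeleton is a.a.s.\ full, and $H_i=0$ for $i<l-1$ is immediate.

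\textbf{Collapse and $H_i$, $i\ge l-1$.} For the collapse onto \di\ $l'$, I would run a greedy elimination from the top down. For $k>l'$ we have $\nu_k<0$, i.e.\ $2\gamma_k<k$. The expected number of pairs of distinct $k$-simplices of $\overline{X}$ sharing a $(k-1)$-face is at most about $n^{k+1}\cdot n\cdot n^{-2(k-\gamma_k)}$, which is $n^{2\gamma_k-k}=n^{\nu_k}\to0$. This bound needs a check of the correlated case, where both simplices come from a single higher facet, but that case contributes at most $n^{\gamma_{k+1}}$, which is also small. Hence a.a.s.\ every $k$-simplex has all its $(k-1)$-faces free once the higher simplices are gone. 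Induct downward from $r$: at level $k$, the top $k$-simplices collapse through free faces. The main obstacle is making this rigorous for the dimensions above $l'$ all at once. Simplices of dimension $k$ arise inside larger $\overline{\Delta^i}$, so freeness must be checked after the higher levels have been removed, and the collapse pattern should be: collapse each sampled facet's closure down to its boundary-incident structure. I would handle this by showing that, for $k>l'$, the union of sampled facets of dimension $\ge k$ restricted to $k$-faces is a.a.s.\ a disjoint union of simplex closures, meeting only in dimension $<k$. Simplex closures are collapsible to any codimension.

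\textbf{Homology counts.} Given the collapse to \di\ $l'$, the bounds follow from Morse-type inequalities.
\begin{itemize}
\item $H_{l-1}=0$: every $(l-1)$-cycle bounds, by a Linial--Meshulam type argument. With $\beta>l$, the $l$-faces are dense enough, since each $(l-1)$-face has $\gg1$ cofaces. I would use a cohomological cocycle expansion of the full $(l-1)$-skeleton, via Garland or Meshulam--Wallach style bounds.
\item $\dim H_l=\Theta(n^\beta)$: the upper bound is the count of $l$-faces, $n^{\gamma_l}=n^\beta$. For the lower bound, take Euler characteristic estimates on the collapsed complex, where $f_l\asymp n^\beta$ dominates $f_{l-1}\le n^{l}$ and $f_{l+1}\asymp n^{\gamma_{l+1}}\le n^{\beta-1}$, using $\beta\notin\Z$.
\item $\dim H_k$ for $k>l$: bound it by the number of $k$-faces not killed. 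Each $k$-face has few cofaces, and $H_k$ is supported on configurations of $k$-faces glued along $(k-1)$-faces. The count of such pairs is $n^{\nu_k}$ in expectation, and Markov's inequality gives $\omega(n)n^{\nu_k}$.
\end{itemize}
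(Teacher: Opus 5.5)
The paper does not prove this theorem; it is quoted from \cite{FarberNowik23}, so your outline has to be judged on its own. The skeleton computation and the vanishing of $H_i$ for $i<l-1$ are fine. The case $i=l-1$ of $H_i(\overline X;\Z)=0$, however, is a genuine gap. There you must show that every $(l-1)$-cycle of the complete $(l-1)$-skeleton bounds \emph{over $\Z$}, and the tools you name do not give this:
\begin{itemize}
\item Garland's method only yields vanishing over $\mathbb{R}$ or $\Q$.
\item Meshulam--Wallach style cocycle counting handles one finite coefficient group $R$ at a time, union-bounding over roughly $(|R|-1)^s$ cochains of support $s$. It therefore cannot rule out $q$-torsion in $H_{l-1}(\overline X;\Z)$ for large primes $q$. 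A priori this torsion can have order exponential in $n^l$, so you cannot union-bound over all primes either.
\item Both arguments assume independent $l$-faces. In $\overline X$ the $l$-faces arrive in correlated blocks from the sampled $i$-facets, and the independent $l$-facets alone may be far too sparse (e.g.\ $\beta_l<l<\beta_i$ for some $i>l$).
\end{itemize}
Even for $Y_l(n,p)$, integral vanishing needed separate ideas (Hoffman--Kahle--Paquette, {\L}uczak--Peled). As written, this step is asserted rather than proved.

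Two further steps fail as stated.

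\textbf{Lower bound on $\dim H_l$.} You use $f_{l+1}\asymp n^{\gamma_{l+1}}\le n^{\beta-1}$, but $\gamma_{l+1}=\beta$ whenever $\beta$ is attained at some $i\ge l+1$. Then $f_{l+1}\asymp n^\beta$. If only $p_i\neq0$ with $i\ge 2l+3$, each sampled facet has $\binom{i+1}{l+2}>\binom{i+1}{l+1}$, so $f_l-f_{l-1}-f_{l+1}$ is negative. Passing to the complex collapsed onto \di\ $l'$ does not rescue this, since nothing controls how many $(l+1)$-faces survive; the natural collapse of a facet onto $v*\mathrm{skel}_{l'-1}$ keeps all of them when $l'\ge l+2$. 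What works is per-facet rank accounting:
\begin{itemize}
\item $\mathrm{rank}\,\partial_{l+1}\le\sum_F\binom{\dim F}{l+1}$, since $B_l(\overline X)=\sum_F B_l(\overline F)$.
\item $f_l\ge\sum_F\binom{\dim F+1}{l+1}$ minus the number of pairs of sampled facets sharing $l+1$ vertices. That number has expectation $O(n^{2\beta-l-1})$, hence is a.a.s.\ $o(n^\beta)$.
\item Together these give $\dim H_l\ge\sum_F\binom{\dim F}{l}-O(n^l)-o(n^\beta)=\Omega(n^\beta)$.
\end{itemize}

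\textbf{Collapse onto \di\ $l'$.} The correlated case is not small: $n^{\gamma_{k+1}}$ is the number of $(k+1)$-faces, which tends to infinity when $\gamma_{k+1}>0$. Also, the $k$-faces of one sampled facet of \di\ $>k$ never all have free $(k-1)$-faces. Your fallback is the right idea but has the wrong index. What is needed, and true since the expected count is $O(n^{\nu_{l'+1}})\to0$, is that no two distinct sampled facets of \di\ at least $l'+1$ share $l'+1$ vertices. Then every face of \di\ at least $l'$ of such a facet is private to it, and its closure collapses onto $v*\mathrm{skel}_{l'-1}$ of the opposite face. ``Meeting only in \di\ $<k$'' with $k=l'+1$ is too weak: $l'+3$ sampled $(l'+1)$-facets forming $\partial\Delta^{l'+2}$ meet pairwise in $l'$-faces and do not collapse to \di\ $l'$.

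\textbf{Bound on $\dim H_k$, $k>l$.} The same correction makes your bound $\dim H_k\le\omega(n)n^{\nu_k}$ go through. Decompose $H_k$ over strong connectivity components w.r.t.\ \di\ $k$; single-facet components contribute nothing. Then count only pairs of distinct facets sharing $k$ vertices.
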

Thus, in \cite{FarberNowik23}, $\overline{X}$ is known to have non-trivial homology in \di\ $l$, but the ranks of $H_{k}(\overline{X}),l<k\le l'$ are only bounded from above, and collapsibility is known only to \di\ $l'$.
We improve upon this in \Cref{thm:uppercollapse}.

Regarding the lower model $\underline{X}$, no collapsibility result exists in the literature. However, we do have the following theorem from \cite{Fowler19}, effectively bounding from below the possible \di s to collapse to:
\begin{theorem}[Fowler]\label{thmFowler}
    For $\underline{X}(n;p_1,p_2,...)$ with $p_i=n^{-\alpha_i}$, denote: $$S_1^k(\{\alpha_i\}_i)=\sum_{i=1}^{k+1}\binom{k+1}{i}\alpha_i\qquad S_2^k(\{\alpha_i\}_i)= \sum_{i=1}^{k}\binom{k+2}{i+1}\alpha_i.$$
        Then:
        \begin{itemize}
            \item If $S_1^k(\{\alpha_i\}_i)<1$, then $H^k(\underline{X},\Q)=0$ a.a.s (see \Cref{aas}).
            \item If $S_2^k(\{\alpha_i\}_i)>k+2$, then $H^k(\underline{X},\Z)=0$ a.a.s.
            \item If $S_1^k(\{\alpha_i\}_i)\geq 1$ and $S_2^k(\{\alpha_i\}_i)<k+2$ and all $\alpha_i$ are positive, then $H^k(\underline{X},\Q)\neq 0$ a.a.s.
        \end{itemize}
\end{theorem}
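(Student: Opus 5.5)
The statement is quoted from \cite{Fowler19} and is only used as a black box later, but a natural proof splits into its three clauses. In each clause the exponents have a direct meaning in terms of expected counts. Fix a $k$-face $\sigma$ of $\underline{X}$ and an outside vertex $v$. Then $\sigma\cup\{v\}$ is a $(k+1)$-face exactly when all the new faces $\tau\cup\{v\}$, for nonempty $\tau\subseteq\sigma$, are present. There are $\binom{k+1}{i}$ of these of dimension $i$, so this happens with probability $n^{-S_1^k}$. Hence the expected $(k+1)$-degree of a $k$-face is $n^{1-S_1^k}$. Similarly, the expected number of hollow $(k+1)$-simplex boundaries $\partial\Delta^{k+1}\subseteq\underline{X}$ is $n^{k+2-S_2^k}$, since such a boundary needs all its $i$-faces for $i\le k$, and there are $\binom{k+2}{i+1}$ of each dimension.

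\textbf{First clause ($S_1^k<1$).} Every $k$-face has about $n^{1-S_1^k}\to\infty$ cofaces, so the complex is locally dense around $k$-faces, and I would use Garland's method. For each $(k-1)$-face $\rho$, the link $\mathrm{lk}(\rho)$ is a random graph on $\approx n^{\text{const}}$ vertices. Conditional on its vertex set it is an Erd\H{o}s--R\'enyi graph whose edge probability $q$ satisfies, by the count above, $q\cdot|\mathrm{lk}(\rho)|\approx n^{1-S_1^k}$. This is well above the $\log$ connectivity threshold, so by the Hoffman--Kahle--Paquette spectral gap theorem the normalized Laplacian of every link has spectral gap $>1-1/(k+1)$ simultaneously, with a union bound over the polynomially many $\rho$. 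Garland's theorem then gives $H^k(\underline{X};\Q)=0$. The main technical obstacle is the concentration needed for that union bound. The links are not exactly $G(N,q)$, because of correlations through lower faces. I would first condition on the $(k-1)$-skeleton near $\rho$, which is a.a.s.\ "typical" by Chernoff bounds, and then apply the spectral bound with failure probability $o(n^{-k})$ per link.

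\textbf{Second clause ($S_2^k>k+2$).} A.a.s.\ there are no hollow $(k+1)$-boundaries. A first-moment computation over all small configurations would show more: a.a.s.\ every subcomplex spanned by the $k$-faces near any fixed $k$-face is "tree-like". More precisely, the $k$-faces can be ordered so that each one has a $(k-1)$-face free at the moment it is removed. Removing them in this order collapses the $k$-faces away, and this shows that the $k$-dimensional part carries no integral cohomology. The key step is a local finiteness lemma: a.a.s.\ every $k$-dimensional strongly connected component has bounded size and contains no $k$-cycle. I would prove it by showing that the expected number of minimal $k$-cycles on $m$ vertices is at most $n^{m-\text{(face cost)}}\to0$, using $S_2^k>k+2$ as the base case. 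I expect this to be the hardest clause to make rigorous, because the first-moment bound must be uniform over all cycle shapes.

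\textbf{Third clause ($S_1^k\ge1$, $S_2^k<k+2$, all $\alpha_i>0$).} Here I would build an explicit nontrivial class. Since $S_2^k<k+2$, the second moment method shows that hollow boundaries $\partial\Delta^{k+1}$ appear in number $\Theta(n^{k+2-S_2^k})\to\infty$. For such a boundary $B$, the probability that its filling $(k+1)$-face is present is $p_{k+1}\to0$, using $\alpha_{k+1}>0$. The probability that some $k$-face of $B$ has a coface outside $B$ is at most of order $n^{1-S_1^k}$ times lower-order factors. When $S_1^k\ge1$ this is bounded, and it tends to $0$ once the positivity of the $\alpha_i$ is taken into account; the boundary case $S_1^k=1$ needs care. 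Chebyshev then gives a.a.s.\ a boundary $B$ whose $k$-faces are all maximal in $\underline{X}$. For a $k$-face $\sigma$ of such a $B$, the indicator cochain $\mathbb{1}_\sigma$ is a cocycle because $\sigma$ has no cofaces. It pairs to $\pm1$ with the $k$-cycle $\partial\Delta^{k+1}=B$, so it is not a coboundary. Hence $H^k(\underline{X};\Q)\neq0$.
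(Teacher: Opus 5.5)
The paper does not prove this statement. It quotes it from Fowler and uses it as a black box. The nearest thing it proves is \Cref{thm:lowerCollapse}, which implies your second clause: if $S_2^k>k+2$, the smallest $d$ with $S_2^d>d+2$ satisfies $d\le k$, and a complex collapsing onto dimension $d-1$ has $H^k(\,\cdot\,;\Z)=0$. That is the relevant comparison.

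Your first clause is sound, and easier than you fear. Conditional on $\rho\in\underline{X}$ and on the vertex set of $\mathrm{lk}(\rho)$, the link graph is \emph{exactly} $G(N,q)$. This is because the events $\rho\cup\{u,v\}\in\underline{X}$ are decided by the pairwise disjoint families of hyperedges $\tau\cup\{u,v\}$, $\tau\subseteq\rho$. You only need Chernoff for $N$, and the Hoffman--Kahle--Paquette failure probability is superpolynomially small, so the union bound is free. You should also record purity of the $(k+1)$-skeleton for Garland, which holds a.a.s.\ since $S_1^j\le S_1^k<1$ for $j\le k$.

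In the third clause, the claim that positivity of the $\alpha_i$ makes the outside-coface probability tend to $0$ is false at $S_1^k=1$. There a fixed $k$-face of $B$ is maximal with conditional probability exactly $(1-p_{k+1})(1-n^{-1})^{n-k-2}\to e^{-1}$. This is repairable: you need only one maximal $k$-face, and a constant probability per boundary suffices for a second-moment argument.

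The genuine gap is in the second clause. The lemma you propose, bounded components containing no minimal $k$-cycle, implies neither the collapsing order you want nor $H^k(\underline{X};\Z)=0$. With no $(k+1)$-faces, $H^k(\underline{X};\Z)\cong\operatorname{Hom}(H_k,\Z)\oplus\operatorname{Ext}(H_{k-1},\Z)$, and excluding integral $k$-cycles kills only the first summand. Two examples show the lemma is too weak:
\begin{itemize}
\item A triangulated $\mathbb{RP}^2$ contains no integral $2$-cycle but has $H^2=\Z/2$.
\item The dunce hat contains no cycle over any coefficients, yet has no free edge, so its triangles admit no ordering of the kind you describe.
\end{itemize}
What must be excluded is the strictly larger class of $k$-\emph{cores}: nonempty sets of $k$-faces with no free $(k-1)$-face. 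When there are no $(k+1)$-faces, absence of cores is equivalent to collapsing onto dimension $k-1$.

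You also identify the uniform first-moment bound as the hard part but give no mechanism for it, and that bound is the whole content. The paper's proof of \Cref{thm:lowerCollapse} supplies one:
\begin{itemize}
\item Take a growth process of a strongly connected $k$-dimensional component that does not collapse. Some step must glue a $k$-simplex along all of $\partial\Delta^k$; truncate there.
\item The facets of that $\partial\Delta^k$ lie in distinct earlier $k$-faces $f_0,\dots,f_k$. Here $f_0$ brings a full $k$-simplex, and each $f_j$, $j\ge1$, is forced to bring the $\binom{k+1-j}{i+1-j}$ $i$-faces containing a $(j-1)$-face of $\partial\Delta^k$ missed by $f_0,\dots,f_{j-1}$.
\item By the hockey-stick identity, these faces together with the closing face cost at least as much as $\partial\Delta^{k+1}$ on $k+2$ vertices, so $E\le n^{k+2-S_2^k}\to0$.
\item Every other expansion step only lowers the count, and only finitely many shapes matter. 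Both facts come from $S_1^{k-1}>1$ (\Cref{lem:S2entS1}, \Cref{opDecreasesE}, \Cref{finIsoTypes}).
\end{itemize}
Without this argument, or an equivalent count over cores, your cycle count would at best yield $H_k(\underline{X};\Z)=0$, not the stated vanishing of integral cohomology.
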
 
\noindent This bound turns out to be the best possible.

\noindent Note that $S_1^k,S_2^k$ are non-decreasing in $k$.

\subsection{Results} 
In the lower model, we prove:
\begin{theorem}\label{thm:lowerCollapse}
    For $\underline{X}(n;p_1,...)$ where $p_i=n^{-\alpha_i},\alpha_i\ge 0$, let $d$ be the smallest integer s.t $S_2 ^d(\crb{\alpha_i}_i)>d+2$. Then $\underline{X}$ a.a.s collapses onto \di\ $d-1$.
\end{theorem}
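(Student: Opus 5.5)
Since $S_2^d>d+2$, the expected number of $d$-faces that are "heavily covered" is small, and most $d$-faces should be free faces of few top-dimensional simplices. I would prove the statement by exhibiting an explicit collapsing sequence that removes, dimension by dimension from the top down, every face of dimension $\ge d$.

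The key computation comes first. The quantity $\binom{d+2}{2}\alpha_1+\binom{d+2}{3}\alpha_2+\dots+\binom{d+2}{d+1}\alpha_d$ is exactly the exponent of the probability that all proper faces of dimension $\le d$ of a fixed $(d+1)$-simplex are present in $\underline{X}$. So the expected number of boundaries of $(d+1)$-simplices, i.e.\ copies of $\partial\Delta^{d+1}$ in the $d$-skeleton, is $n^{d+2-S_2^d}\to 0$. More generally, I would set up a local picture. Fix a $(d-1)$-face $\sigma$ and consider its link. I want to show a.a.s.\ that every connected component of the ``$d$-dimensional part'' around any face is a small tree-like (collapsible) configuration.

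The plan has three steps.

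First, handle the faces of dimension $>d$. Any face of dimension $\ge d+1$ requires a full $\partial\Delta^{d+1}$ in its $d$-skeleton. By the first-moment bound above, a.a.s.\ there are none, so $\dim\underline{X}\le d$.

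Second, collapse the $d$-faces. Consider the bipartite incidence structure between $d$-faces and $(d-1)$-faces of $\underline{X}$. I would show a.a.s.\ that every finite $d$-dimensional pure subcomplex $K$ with no free $(d-1)$-face, i.e.\ every ``$d$-core,'' fails to appear. Given this, the greedy collapse, which repeatedly removes a $d$-face together with a free $(d-1)$-face, removes all $d$-faces, since the process can only get stuck at a nonempty $d$-core. The argument follows the standard strategy for the Linial--Meshulam and clique models.

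Third, rule out cores. Split $d$-cores into small ones, with at most $M$ vertices for a large constant $M$, and large ones.

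For small cores, apply a first-moment/density argument. A core on $v$ vertices contains enough lower-dimensional faces that $\log_n E(K)=v-\sum_i \alpha_i f_i(K)$ is negative, where $f_i(K)$ is the number of $i$-faces of $K$ (with $f_i$ counting faces of the downward closure). This requires a combinatorial inequality saying that a $d$-complex without free $(d-1)$-faces is at least as ``dense'' as $\partial\Delta^{d+1}$ in the weighted sense $\sum_i\alpha_i f_i(K)>v$. I would prove it by induction on $v$, comparing with $\partial\Delta^{d+1}$ via the minimality of $d$: $S_2^{d-1}\le d+1$ gives control of the lower skeleta.

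For large cores, observe that the minimality of $d$ makes the parameters sparse. Since $S_2^d>d+2$, the relevant $d$-dimensional random structure behaves like a subcritical branching process, so I would take a minimal core and extract a subcore or a long ``tree-like'' strand whose expectation decays geometrically in its size.

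The main obstacle I expect is the large-core case together with the density inequality. Here the non-uniformity of the $\alpha_i$ matters: lower-dimensional faces are shared among many $d$-faces, so the counting is not the plain Linial--Meshulam one. For this I would first try bounding, for each $(d-1)$-face, the degree distribution of $d$-faces containing it, which is approximately Poisson with vanishing or bounded mean conditional on the $(d-1)$-skeleton. I would then run the exploration-process argument of Aronshtam--Linial conditioned on the lower skeleton.
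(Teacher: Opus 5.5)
Your architecture is sound, and the core reduction is a legitimate substitute for the paper's ``growth process in which every step has a free face.'' Excluding faces of dimension $\ge d+1$ via the first moment of $\partial\Delta^{d+1}$ is exactly the paper's first step, and greedy collapse of $d$-faces stalls only at a nonempty $d$-core. In any growth process of a strongly connected core, the last simplex added is glued along its entire boundary $\partial\Delta^d$. So cores are among the complexes $C$ that the paper estimates, and your target is if anything weaker.

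The gap is that the inequality carrying the whole theorem, $f_0(K)-\sum_i\alpha_i f_i(K)<0$ (indeed $\le d+2-S_2^d$) for every core $K$, is only announced. Moreover, the tool you name for it points the wrong way. The condition $S_2^{d-1}\le d+1$ is an \emph{upper} bound on the $\alpha_i$, whereas this exponent only decreases as the $\alpha_i$ grow; minimality of $d$ is never used in the paper's proof. The needed input is a lower bound. Since $\binom{d+2}{i+1}\le (d+2)\binom{d}{i}$ for $1\le i\le d$, the hypothesis $S_2^d>d+2$ forces $S_1^{d-1}>1$ (\Cref{lem:S2entS1}). Hence every expansion step along a growth process lowers the exponent, and a vertex-adding step lowers it by $S_1^{d-1}-1>0$. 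The paper then charges the closing step. List the $d$-faces $f_0,\dots,f_d$ covering the facets of the final simplex $\{x_0,\dots,x_d\}$ in order of appearance, with $x_k\notin f_k$. The process must at some stage create $x_0$, then $\{x_0,x_1\}$, and in general $\{x_0,\dots,x_{k-1}\}$ together with the $\binom{d+1-k}{i+1-k}$ faces of dimension $i$ containing it. By the hockey-stick identity, these charges plus the initial and final $d$-simplices give exponent at most $d+2-S_2^d$.

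The large-core case, which you flag as the main obstacle, is a non-issue in this regime. The Aronshtam--Linial exploration is tuned to $p=c/n$, and conditioning on the random lower skeleton of $\underline{X}$ would be delicate anyway. Here $S_1^{d-1}>1$ already bounds sizes. Each step of a pure $d$-dimensional growth process adds at most one vertex, so a strongly connected $d$-complex with more than $v_0$ vertices contains one with exactly $v_0$ vertices. Each of the finitely many such types has expected count $O\bigl(n^{b-(v_0-d-1)(S_1^{d-1}-1)}\bigr)$, where $b$ is the budget, and for a suitable constant $v_0$ this tends to $0$ (\Cref{finIsoTypes}). The strong connectivity components of a core are again cores, so a.a.s.\ only finitely many isomorphism types of cores need to be excluded, and a union bound with the density inequality finishes.

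So your proposal is missing two things:
\begin{enumerate}
\item An actual proof of the density inequality, driven by $S_1^{d-1}>1$ and a charging of the final gluing along $\partial\Delta^d$, rather than by minimality of $d$.
\item The bounded-size observation, in place of a branching-process analysis.
\end{enumerate}
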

In other words, the lower model a.a.s collapses to the largest \di \ which is known to have non-trivial cohomology (which $S_2 ^{d-1}(\crb{\alpha_i}_i)<d+1$ would imply by \Cref{lem:S2entS1} and \Cref{thmFowler}).
The case where $S_2 ^d(\crb{\alpha_i}_i)=d+2$ is where the threshold lies for collapsibility of the Linial-Meshulam-Wallach model (see \Cref{exm:lower} for more detail), so \Cref{thm:lowerCollapse} is the best possible for this level of asymptotic "granularity" and general parameters (see \Cref{ex:clique} for an example where restricting the parameters to the ones of the clique complex yields a strictly better bound).

In order to prove a corresponding result in the upper model, we define in \Cref{def:redun} the concept of \textbf{redundancy}. It is a measure of how vertices are shared among maximal faces, and in \Cref{lem:redunColapse} we show it is strongly related to the \di\ to which a simplicial complex collapses.

Using this, formulated in the notation of \Cref{FNNotation}, we prove that simplexes of \di \ greater than $l$ cannot combine to form sufficiently complex structures, and as a result:
\begin{theorem}\label{thm:uppercollapse}
    Suppose $p_i=n^{-\alpha_i},\alpha_i\ge0$, such that there exists $r\in \N$ s.t $p_j=0,j>r$. Then $\overline{X}(n;p_1,...)$ a.a.s collapses onto \di \ $l=\floor{\max_i(i+1-\alpha_i)}$.
\end{theorem}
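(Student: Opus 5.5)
We plan to collapse the faces of dimension greater than $l$ one connected cluster at a time. Work with the generating hypergraph $H\sim X(n;p_1,\dots,p_r)$, so that $\overline{X}=\overline{H}$. Call a facet of $H$ \emph{large} if its dimension exceeds $l$. Consider the auxiliary structure $\mathcal{C}$ formed by the large facets, where two large facets are adjacent when they share at least two vertices, or more generally share enough vertices to create an obstruction to collapsing. The faces of dimension $>l$ in $\overline{X}$ are exactly the faces of large facets. The argument will show that, a.a.s., every component of $\mathcal{C}$ has bounded size and has low redundancy in the sense of \Cref{def:redun}. Then \Cref{lem:redunColapse} will let us collapse each component down to dimension $l$.

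\textbf{First-moment bound.} The main probabilistic step is a first-moment computation. Fix a finite "pattern": a collection of $m$ large simplices of dimensions $i_1,\dots,i_m>l$ on $v$ vertices in total, glued in a prescribed way. Its expected number of appearances in $H$ is $\Theta(n^{v-\sum_j \alpha_{i_j}})$. By definition of $\beta$, each $\alpha_i\ge i+1-\beta$, and since $\beta<l+1\le i$ for a large facet, each $\alpha_i>i+1-(l+1)=i-l\ge 1$. When simplices are glued so that they share vertices, the exponent decreases. Adding a facet of dimension $i$ that shares $s$ vertices with the existing pattern changes the exponent by $(i+1-s)-\alpha_i\le \beta-s$, and $\beta-s<0$ as soon as $s\ge l+1$. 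For finite $r$ this gives a uniform gap $\varepsilon>0$ with $\beta_i\le\beta<\lfloor\beta\rfloor+1$. We will show that any pattern whose redundancy exceeds the threshold in \Cref{lem:redunColapse} has expected count $n^{-\varepsilon}$. We will also show that large-facet clusters with more than some constant $K$ facets (depending only on $r$ and $\varepsilon$) have vanishing expected count. Since the number of patterns up to isomorphism with at most $K$ facets of dimension at most $r$ is finite, a union bound gives that a.a.s. no bad pattern appears.

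\textbf{Deterministic step.} Next, verify that when no bad pattern appears, the complex collapses to dimension $l$. Facets of dimension $\le l$ are untouched. For large facets, low redundancy means each large simplex $\sigma$ has a subface of dimension $\ge l$ that is free, i.e., not contained in any other facet of $\overline{X}$. Here we use that its intersections with other facets are small. We then collapse $\sigma$ top-down through that free face, removing all faces containing it in dimension order. This is exactly the content of \Cref{lem:redunColapse}, so we must check that its hypothesis matches the patterns excluded in the first-moment step. We also need that collapsing one cluster does not interfere with another. This holds because distinct components only meet in faces of dimension $<l$, or at least faces not containing the chosen free faces.

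\textbf{Main obstacle.} The hardest step is choosing the right notion of "bad pattern". It must be rare enough that every bad configuration has negative exponent; the subtle cases are overlaps of size exactly $l$ or $l+1$, where $\beta-s$ may be positive. It must also be broad enough to guarantee redundancy below the threshold of \Cref{lem:redunColapse}. I would first try the following: a pattern is bad if it is a minimal connected set of large facets in which every facet has all of its $(l)$-dimensional subfaces (those with $l+1$ vertices) covered by other facets. Then I would bound the exponent by an ordering/peeling argument. Each added facet after the first shares at least $l+1$ vertices with previous ones, contributing $\le \beta-(l+1)<0$. The first facet contributes $\le\beta$, which is cancelled once enough facets cover all its faces, giving the required negative total.
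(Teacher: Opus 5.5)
Your overall plan is the paper's proof. You reduce to strong connectivity components w.r.t.\ dimension $l+1$ via \Cref{lemColapseComponents}. You use a first-moment count to show that a.a.s.\ every component has boundedly many maximal faces and redundancy at most $l$. You finish with \Cref{lem:redunColapse} for $k=l+1$. Counting configurations of hyperedges of $H$ directly, using their independence, is a mild simplification of the paper's honest-subcomplex bookkeeping in \Cref{lem:upperBudgetCost}. It is legitimate because every maximal face of $\overline{X}$ is a hyperedge.

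Two points need correcting, though. First, adjacency must mean sharing at least $l+1$ vertices (an $l$-face). With ``two vertices'', clusters need not be bounded, since an overlap of size $2$ changes the exponent by up to $\beta-2$, which can be positive; and \Cref{lem:redunColapse} would be invoked with the wrong $k$. With the correct adjacency there are no subtle overlaps. Overlaps of size at most $l$ never join two facets, and an overlap of size $s\ge l+1$ contributes at most $\beta-s<0$.

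Second, the obstacle in your last paragraph is not real, and the resolution you propose there would fail. Simply call a connected cluster bad if either
\begin{itemize}
\item it has exactly $K+1$ facets, where $K(l+1-\beta)>\beta$, or
\item it has at most $K$ facets and redundancy $R\ge l+1$.
\end{itemize}
Writing $s_j=l+1+t_j$, your per-facet bound gives an exponent of at most $\beta-(m-1)(l+1-\beta)-R$ for an $m$-facet cluster. Here $R=\sum_{j\ge 2}t_j$ is exactly its redundancy w.r.t.\ dimension $l+1$ (\Cref{def:redun}). This exponent is negative in both cases, there are finitely many bad types, and any component with more than $K$ facets contains a connected subcluster with exactly $K+1$ facets.

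By contrast, your proposed notion does not work, for two reasons.
\begin{itemize}
\item Forbidding clusters in which every $l$-face of every facet is covered does not force $R\le l$. Two facets of dimension $\ge 2l+2$ meeting in $2l+2$ vertices have $R=l+1$ and many uncovered $l$-faces.
\item An uncovered $l$-face does not by itself let you collapse its simplex away. For $l=1$, take a $4$-simplex on $\{1,\dots,5\}$ meeting the rest exactly in the sphere $\partial\{1,2,3,4\}$. It has uncovered edges $\{i,5\}$, yet cannot be collapsed onto the rest plus faces of dimension $\le 1$ if that sphere is non-trivial in $H_2$ of the rest.
\end{itemize}
What \Cref{lem:redunColapse} actually does is apply \Cref{lem:gluingcollapse} in reverse growth order. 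It uses the fact that $R\le l$ forces each attaching complex to have at most $l+1$ maximal faces of dimension $\ge l$.
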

 In particular, all homology above \di\ $l$ is strictly $0$ (over any coefficient group), improving the bounds and the collapsibility result from \Cref{FNResults}.
 \cite{FarberNowik23} also prove that $\overline{X} (n;p_1,...)$ a.a.s has a complete ($l-1$)-skeleton, making it appear similar to the Linial-Meshulam-Wallach model after the collapse.
 One should note, however, that collapsing procedures involve choices, as well as the fact that different $l$-faces in $\overline{X}$ are independent only in the case $l=r$, but otherwise are positively correlated if they share at least $r-l$ vertices (as the existence of one makes the existence of all faces containing it more likely).
 From \Cref{thm:uppercollapse} we conclude in \Cref{cor:wedgeSpheres} that, provided $3\le \beta\not \in\N$, $\overline{X}$ is homotopy equivalent to a wedge of spheres.
\subsection{Structure of the paper}
In \Cref{prelim} we establish the basic definitions and lemmas we will use in our proofs. In \Cref{chap:lower} we prove \Cref{thm:lowerCollapse}, as well as compare it to the existing results in the clique and Linial-Meshulam models.
In \Cref{chap:Upper} we prove \Cref{thm:uppercollapse}.
Finally, \Cref{discus} is reserved for discussion.

\section{Preliminaries}\label{prelim}

\begin{definition}\label{aas}    
 Let $D(n;\vec{p}(n))$ be a family of distributions depending on a natural number $n$ and perhaps other parameters. $D(n;\vec{p}(n))$ satisfies a property $q$ \textbf{a.a.s} (asymptotically almost surely) if $\underset{n\to \infty}{\lim} P(q)=1$.
 \end{definition}

\begin{example}
    
Let $G(n,p)$ be a distribution on graphs with $n$ vertices, where each edge appears independently with probability $p$.
A theorem by Erdős and Rényi states that $\frac{\ln(n)}{n}$ is the threshold function for connectivity, meaning that for $p(n)=o(\frac{\ln(n)}{n})$ the graph is a.a.s disconnected, and for $p(n)=\omega(\frac{\ln(n)}{n})$ it is a.a.s connected. \end{example} 

\begin{definition}
    A simplicial complex is called \textbf{pure $d$-dimensional}  if any face in the complex is contained in a $d$-\di al simplex. Note that this implies that $d$ is the maximal dimension.
\end{definition}
\begin{definition}\label{defStrongC}
 For a simplicial complex define a relation $\sim$ on its $d$-dimensional simplexes by having $\sigma \sim \tau$ if they share a $(d-1)$-dimensional face. 
 This is obviously reflexive and symmetric. Complete this relation under transitivity and call it $\sim'$. A \textbf{Strong Connectivity Component} is an equivalence class of $d$-dimensional faces under this relation. 
 
A pure-$d$-dimensional simplicial complex is called \textbf{Strongly Connected} if all of its $d$-\di al faces are in the same component. A simplicial complex where all maximal simplexes are of \di \ greater or equal to $d$ is called \textbf{Strongly Connected With Respect To (w.r.t) Dimension $d$} if its $d$-skeleton is strongly connected.
\end{definition}
The importance of strong connectivity components is seen in the following:
\begin{lemma}[Lemma 2.7 in \cite{kogan24}]\label{lemCupSteenConc}
Let $X$ be a simplicial complex, $d\in\N$, and $\sigma\in A_p$, the Steenrod algebra for prime $p$.
Then any one of $H^d(X)$,  $Im (\cup : H^*(X)\times H^*(X)\to H^d(X))$  and $Im(\sigma)\subset H^d$ is non-zero only if there exists a $d$-\di al strong-connectivity component on which it is non-zero. 
    If $X$ has $d$ as the maximal \di , this is an if and only if for $H^d(X)$.
\end{lemma}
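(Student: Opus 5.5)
The plan is to reduce the statement to the top dimension and then to a single strong connectivity component. First, note that $H^d(X)$ (with any coefficients) is computed from the cochain complex, and a $d$-cochain can be decomposed as a sum of its restrictions to the strong connectivity components of the $d$-skeleton, since every $d$-face lies in exactly one component. The coboundary $\delta c$ of a $(d-1)$-cochain $c$ only sees, on each $d$-face $\tau$, the values of $c$ on the $(d-1)$-faces of $\tau$. Any $(d-1)$-face of $\tau$ is shared only among $d$-faces of the same component. Hence the image of $\delta$ splits as a direct sum over components of the images of the restricted coboundaries. The same splitting holds for the cocycle condition, as far as it involves $(d+1)$-faces: the boundary of a $(d+1)$-face has all its $d$-faces pairwise sharing $(d-1)$-faces, so all lie in one component. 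Consequently $H^d(X)\cong\bigoplus_C H^d(X)|_C$ in the appropriate sense. Here $H^d(X)|_C$ means cocycles supported on $C$ modulo coboundaries restricted to $C$.

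From this decomposition, a class in $H^d(X)$ is nonzero iff its restriction to some component $C$ is nonzero. That is, it is not a coboundary when restricted to $C$. This gives the first claim.

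For the cup product and Steenrod operations, the plan is to use naturality. Let $X_C$ be the subcomplex generated by the $d$-faces of $C$ together with the full $(d-1)$-skeleton. Consider the inclusion $i_C:X_C\to X$. An element $a\in H^d(X)$ is zero iff $i_C^*a=0$ for all $C$; this is because vanishing on each $C$ modulo restricted coboundaries lets one glue the primitives. Gluing is possible since the primitives live on the common $(d-1)$-skeleton, and the differences can be absorbed: the coboundary pieces are independent per component by the splitting above. This gluing is the step I expect to be the main obstacle. The issue is that primitives for different components might disagree on shared $(d-1)$-faces.

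I would handle this by working directly with the direct-sum decomposition of $C^d$ and of $\mathrm{Im}\,\delta$. I would not glue primitives. Instead, $a=\sum_C a_C$ with $a_C\in \mathrm{Im}\,\delta|_C$ for each $C$ means $a\in\bigoplus_C \mathrm{Im}\,\delta|_C$. This must be checked to equal $\mathrm{Im}\,\delta$, which needs care precisely because of shared $(d-1)$-faces. If that fails in general, I would fall back to defining "non-zero on $C$" via $i_C^*$ and only proving the needed implication: if $a\neq0$ then $i_C^*a\ne0$ for some $C$.

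Then, for $a=\sigma(b)$ or $a=b\cup b'$, naturality gives $i_C^*a=\sigma(i_C^*b)$ or $i_C^*b\cup i_C^*b'$. So the class is nonzero on a component, as stated. Finally, for $d$ maximal there are no $(d+1)$-faces, so every cochain is a cocycle. A nonzero class on some $C$ lifts by extension by zero to a cocycle on $X$ that is not a coboundary. This gives the converse for $H^d(X)$.
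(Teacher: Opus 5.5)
Your argument is correct. This paper gives no proof of the lemma (it is quoted from \cite{kogan24}), so there is no in-paper argument to compare against. Splitting the cochains over components and then using naturality is the natural route.

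The obstacle you flagged is not real, and your proposal is inconsistent about it. Two distinct components never share a $(d-1)$-face: a $(d-1)$-face contained in $d$-faces $\tau,\tau'$ forces $\tau\sim\tau'$ by the definition of the relation. Write $\bar C$ for the closure of a component $C$. Then $C^{d-1}(X)=\bigoplus_C C^{d-1}(\bar C)\oplus F$, where $F$ is spanned by the $(d-1)$-faces lying in no $d$-face. The map $\delta$ sends $C^{d-1}(\bar C)$ into cochains supported on $C$ and kills $F$. Hence $\mathrm{Im}\,\delta=\bigoplus_C \delta C^{d-1}(\bar C)$ exactly, and primitives on the various $\bar C$ glue with no compatibility condition to check.

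You already stated this in your first paragraph. The apparent overlap only arises because your $X_C$ carries the full $(d-1)$-skeleton, whose extra faces do not influence $\delta$ into degree $d$. Indeed $H^d(X_C)=H^d(\bar C)$, so it is cleaner to work with $\bar C$.

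Your proposed fallback is also not a weaker statement. The claim ``$a\neq 0$ implies $i_C^*a\neq 0$ for some $C$'' is equivalent to the gluing claim, so it could not have rescued the argument had the splitting failed. It is the first-paragraph observation that carries the proof.

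With that observation, the rest is immediate:
\begin{itemize}
\item $\bar C$ has no $(d+1)$-faces, so $H^d(\bar C)=C^d(C)/\delta C^{d-1}(\bar C)$.
\item The $C$-summand of $H^d(X)$ (cocycles of $X$ supported on $C$, modulo $\delta C^{d-1}(\bar C)$) injects into $H^d(\bar C)$ under $i_C^*$, while the other summands restrict to zero. So the maps $i_C^*$ jointly detect non-vanishing.
\item Naturality of $\cup$ and of Steenrod operations transfers this to their images.
\item In top dimension, extension by zero gives $H^d(X)\cong\bigoplus_C H^d(\bar C)$, which is the converse.
\end{itemize}
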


It turns out strongly connected complexes can be constructed one face at a time:

\begin{definition}\label{expansionOp}
    For a simplicial complex $X$ and $d$ s.t $X$ is strongly connected w.r.t \di \ $d$, an \textbf{expansion operation} is the addition of a simplex of \di \ greater or equal to $d$, while keeping the component strongly connected.
    Specifically, the added simplex must share a $(d-1)$-dimensional face with the existing complex, in addition to possibly other faces.\par
    Two expansion operations $A\mapsto A\cup_Q \Delta^D, B\mapsto B\cup_W \Delta^D$ are of the same type if $Q\cong W$. 
    In particular, an expansion operation $A\mapsto A\cup_Q \Delta^D$ is called a \textbf{vertex adding operation} if $Q=\Delta^{d-1}$.
    
\end{definition} 

\begin{lemma}\label{vertBeforeAles}
     For $C$ strongly connected w.r.t \di\ $d$, there exists a sequence $\Delta^{D_0}=C_0\subset C_1\subset...\subset C_m=C$ of expansion operations, such that all $C_i$ are strongly connected w.r.t \di\ $d$, and $C_{i+1}=C_i\cup_{Q_i} \Delta^{D_i}$ where for all $i$, $D_i\ge d$, $Q_i$ contains a $(d-1)$-face and $C_i$ has $i+1$ maximal faces.
    Such a sequence exists for any choice of $C_0$ and is called a \textbf{growing process} or \textbf{growth process} of $C$.
\end{lemma}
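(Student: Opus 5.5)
We argue by induction on the number of maximal faces of $C$, building the sequence greedily from the prescribed starting simplex $C_0=\Delta^{D_0}$. Here $\Delta^{D_0}$ is any maximal face of $C$ of dimension $D_0\ge d$; every maximal face has dimension at least $d$ by the definition of strong connectivity w.r.t.\ \di\ $d$. The natural choice for each $C_i$ is the subcomplex of $C$ generated by the first $i+1$ chosen maximal faces $\tau_0=\Delta^{D_0},\tau_1,\dots,\tau_i$ of $C$, i.e.\ $C_i=\bigcup_{j\le i}\overline{\tau_j}$. With this choice $C_i$ has exactly $i+1$ maximal faces, since distinct maximal faces of $C$ are not contained in one another. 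Also $C_m=C$ once all $m+1$ maximal faces have been used, because a simplicial complex is the union of the closures of its maximal faces.

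The inductive step is as follows. Suppose $C_i\subsetneq C$ has been built and is strongly connected w.r.t.\ \di\ $d$. We look for a maximal face $\tau$ of $C$ not yet used such that $\tau$ contains a $d$-face sharing a $(d-1)$-face with some $d$-face of $C_i$. Such a $\tau$ exists by strong connectivity of the $d$-skeleton of $C$. Take any $d$-face $\sigma$ of $C_i$ and any $d$-face $\sigma'$ of an unused maximal face. A chain $\sigma=\rho_0\sim\rho_1\sim\dots\sim\rho_k=\sigma'$ in the $d$-skeleton of $C$ must contain a first $\rho_j$ not lying in $C_i$, and then $\rho_{j-1}\in C_i$. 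Now $\rho_j$ lies in some maximal face $\tau$ of $C$, and $\tau$ is unused because $\rho_j\notin C_i$. The face $\tau$ has dimension at least $d$, and $\rho_{j-1}\cap\rho_j$ is a $(d-1)$-face contained in both $\tau$ and $C_i$. Set $C_{i+1}=C_i\cup_{Q_i}\overline{\tau}$ with $Q_i=C_i\cap\overline{\tau}$, and $D_i=\dim\tau$. Then $Q_i$ contains a $(d-1)$-face, as required.

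It remains to check that $C_{i+1}$ is strongly connected w.r.t.\ \di\ $d$. Every maximal face of $C_{i+1}$ is either $\tau$ or a maximal face of $C_i$, so all maximal faces of $C_{i+1}$ have dimension at least $d$. The $d$-faces of $\overline{\tau}$ form a single strong component, since any two $d$-faces of a simplex are joined by a chain of $d$-faces in which consecutive faces differ in one vertex. Moreover $\rho_j\subset\tau$ is adjacent to $\rho_{j-1}\in C_i$. Every new $d$-face therefore joins the component of $C_i$, and the $d$-skeleton of $C_{i+1}$ is strongly connected. It is also pure, because every face of $C_{i+1}$ lies in some $\tau_j$ of dimension at least $d$ and hence in a $d$-face.

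The only delicate point is an interpretive one, namely making sure that "strongly connected w.r.t.\ \di\ $d$" for the intermediate $C_i$ holds with the induced $d$-skeleton. This is handled by the purity observation above. The rest is routine.
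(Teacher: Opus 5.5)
Your construction is essentially the paper's. The paper builds the graph $G_C$ on the maximal faces of $C$ (adjacent when they meet in dimension at least $d-1$), takes a spanning tree rooted at $C_0$, and adds one leaf at a time; your greedy search does the same thing on the fly. Your checks that $C_i$ has $i+1$ maximal faces and stays strongly connected are fine. One step, however, fails as written.

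You pick a $d$-face $\sigma'$ of an unused maximal face and assert that any chain from $\sigma$ to $\sigma'$ has a first term $\rho_j\notin C_i$. This presupposes $\sigma'\notin C_i$, which can fail once $C$ has maximal faces of dimension larger than $d$. For example, take $d=1$ and maximal faces $abx$, $bcy$, $caz$, $abc$. After $C_2=\overline{abx}\cup\overline{bcy}\cup\overline{caz}$, every edge of $C$ already lies in $C_2$. So no chain ever leaves $C_2$, and your argument produces no $\tau$, even though $abc$ is still unused.

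The repair is short. One option is to note separately that if some unused maximal face $\tau'$ has a $d$-face in $C_i$, it can be attached directly. More cleanly, run the chain on maximal faces:
\begin{itemize}
\item Choose maximal faces $\mu_j\supseteq\rho_j$ with $\mu_0$ among the used ones and $\mu_k=\tau'$.
\item Let $\mu_j$ be the first unused one. Then $\mu_{j-1}\subseteq C_i$, so $\mu_j$ shares with $C_i$ a common $(d-1)$-face of $\rho_{j-1}$ and $\rho_j$.
\item Also $\rho_j\subseteq\mu_j$ is adjacent to $\rho_{j-1}\in C_i$, which is what your connectivity check needs.
\end{itemize}
This is exactly the argument behind the paper's unproved assertion that strong connectivity makes $G_C$ connected. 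With this patch your proof is complete. Note that the genuinely delicate point is this one, not the purity of the $d$-skeleton you flagged.
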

\begin{proof}
 Define a graph $G_C=(V_C,E_C)$, where $V_C$ is the set of maximal faces of $C$, and $(\mu,M)$ is an edge if and only if $\dim(\mu\cap M)\ge d-1$. 
    The strong connectivity of $C$ is equivalent to $G_C$ being connected, and so $G_C$ contains a spanning tree.
    Any tree has a filtration $\{v\}=T_0\subset T_1\subset ...\subset T_m=T$, where we add 1 vertex and 1 edge at a time, keeping the tree connected, and such a filtration corresponds to the desired $\Delta^{D_0}=C_0\subset C_1\subset...\subset C_m=C$. Furthermore, $C_0$ may be chosen to be any face, as any vertex may be chosen to be the root of the tree.
\end{proof}

\begin{definition}\label{defPartial}
    For a simplicial complex $A$ with $a$ vertices, and a model of random simplicial complexes $X$, we denote 
    $$E(A\subset X)=\sum_{\underset{|S|=a}{S\subset V(X)}}P(A\subset X|_S),$$
    where $P(A\subset X|_S)$ is the probability of it being possible to map $A$ injectively to the induced subcomplex on $S$.
    In other words, it is the expected number of appearances of $A$ as a subcomplex of $X$. We sometimes simply write "$E(A)$" if $X$ is clear from context.
\end{definition}
In order to use the first and second moment methods, we wish to have terminology tying a growing process of a component to its expectation.
\begin{definition}\label{defBudgetCost}
    For a model of random simplicial complexes $X$, the \textbf{budget} (w.r.t \di\ $d$) is $\lim_{n\to \infty}\log_n(E(\Delta^d\subset X))$.
    For an expansion operation $A\mapsto A'$ the \textbf{cost} is $\lim_{n\to \infty}\log_n\prs{\frac{E(A\subset X)}{E(A'\subset X)}}$. 
    
\end{definition}

\begin{example}\label{expDim2}
    
 The following are the expansion operations for the pure 2-dimensional case:
\begin{center}
\begin{tabular}{| c | c | c |} 
 \hline
 Operation & Cost in $\underline{X}$ & Homotopical effect\\
 \hline
\begin{tikzpicture}[x=0.75pt,y=0.75pt,yscale=-1,xscale=1]

\draw  [fill={rgb, 255:red, 155; green, 155; blue, 155 }  ,fill opacity=1 ] (16.25,54.25) -- (56.25,34.25) -- (56.25,74.25) -- cycle ;
\draw  [color={rgb, 255:red, 208; green, 2; blue, 27 }  ,draw opacity=1 ][pattern=redstripes,pattern size=6pt,pattern thickness=0.75pt,pattern radius=0pt, pattern color={rgb, 255:red, 208; green, 2; blue, 27}] (96.25,54.25) -- (56.25,74.25) -- (56.25,34.25) -- cycle ;
\draw  [fill={rgb, 255:red, 208; green, 2; blue, 27 }  ,fill opacity=1 ] (92,54.25) .. controls (92,51.9) and (93.9,50) .. (96.25,50) .. controls (98.6,50) and (100.5,51.9) .. (100.5,54.25) .. controls (100.5,56.6) and (98.6,58.5) .. (96.25,58.5) .. controls (93.9,58.5) and (92,56.6) .. (92,54.25) -- cycle ;
\end{tikzpicture} &   $2\alpha_1+\alpha_2-1 >0$  &     None \\
 \hline 
\begin{tikzpicture}[x=0.75pt,y=0.75pt,yscale=-1,xscale=1]

\draw  [fill={rgb, 255:red, 155; green, 155; blue, 155 }  ,fill opacity=1 ] (16.25,54.25) -- (56.25,34.25) -- (56.25,74.25) -- cycle ;
\draw  [color={rgb, 255:red, 208; green, 2; blue, 27 }  ,draw opacity=1 ][pattern=redstripes,pattern size=6pt,pattern thickness=0.75pt,pattern radius=0pt, pattern color={rgb, 255:red, 208; green, 2; blue, 27}] (96.25,54.25) -- (56.25,74.25) -- (56.25,34.25) -- cycle ;
\draw  [fill={rgb, 255:red, 0; green, 0; blue, 0 }  ,fill opacity=1 ] (92,54.25) .. controls (92,51.9) and (93.9,50) .. (96.25,50) .. controls (98.6,50) and (100.5,51.9) .. (100.5,54.25) .. controls (100.5,56.6) and (98.6,58.5) .. (96.25,58.5) .. controls (93.9,58.5) and (92,56.6) .. (92,54.25) -- cycle ;
\end{tikzpicture} & $2\alpha_1+\alpha_2>1$ & Adds generator to $H^1$\\
 \hline 
\vspace*{3 pt}
\begin{tikzpicture}[x=0.75pt,y=0.75pt,yscale=-1,xscale=1,rotate=-90]

\draw  [color={rgb, 255:red, 208; green, 2; blue, 27 }  ,draw opacity=1 ][pattern=redstripes,pattern size=6pt,pattern thickness=0.75pt,pattern radius=0pt, pattern color={rgb, 255:red, 208; green, 2; blue, 27}] (28.93,81.57) -- (63.57,61.57) -- (63.57,101.57) -- cycle ;
\draw  [fill={rgb, 255:red, 155; green, 155; blue, 155 }  ,fill opacity=1 ] (26.25,36.93) -- (63.57,61.57) -- (28.93,81.57) -- cycle ;
\draw  [fill={rgb, 255:red, 155; green, 155; blue, 155 }  ,fill opacity=1 ] (26.25,126.21) -- (28.93,81.57) -- (63.57,101.57) -- cycle ;
\end{tikzpicture} & 
    $\alpha_1+\alpha_2>\frac{1}{2}$
& 
    None
\\
 \hline 
 \begin{tikzpicture}[x=0.75pt,y=0.75pt,yscale=-0.75,xscale=0.75]

\draw  [color={rgb, 255:red, 208; green, 2; blue, 27 }  ,draw opacity=1 ][pattern=redstripes,pattern size=6pt,pattern thickness=0.75pt,pattern radius=0pt, pattern color={rgb, 255:red, 208; green, 2; blue, 27}] (28.93,81.57) -- (63.57,61.57) -- (63.57,101.57) -- cycle ;
\draw  [fill={rgb, 255:red, 155; green, 155; blue, 155 }  ,fill opacity=1 ] (26.25,36.93) -- (63.57,61.57) -- (28.93,81.57) -- cycle ;
\draw  [fill={rgb, 255:red, 155; green, 155; blue, 155 }  ,fill opacity=1 ] (26.25,126.21) -- (28.93,81.57) -- (63.57,101.57) -- cycle ;
\draw  [fill={rgb, 255:red, 155; green, 155; blue, 155 }  ,fill opacity=1 ] (103.57,81.57) -- (63.57,101.57) -- (63.57,61.57) -- cycle ;

\end{tikzpicture} & \centering $\alpha_2$ &     Reduces $H^1$ or increases $H^2$ \\
 \hline
\end{tabular}

\end{center}\par
\noindent where the costs are for the lower model where $p_i=n^{-\alpha_i}$, and the inequalities follow from $S^1 _1(\{\alpha_i\}_i)>1$. The black and gray simplexes denote the complex before the expansion operation,  while the red patterned simplexes denote new simplexes. \par
For instance, consider the following growth process for the tetrahedron:
$$\begin{tikzpicture}

\draw[fill=gray,fill opacity=0.4]  (0,0) 
  -- (2,0) 
  -- (2.2,1) 
  -- cycle;

\draw (0,0) node[left]{$A$}
(2,0) node[right]{$B$}
  (2.2,1) node[right]{$C$};
\end{tikzpicture}
\begin{tikzpicture}

\draw[fill=gray,fill opacity=0.4]  (0,0) 
  -- (2,0) 
  -- (2.2,1) 
  -- cycle;
\draw[pattern=redstripes,pattern size=6pt,pattern thickness=0.75pt,pattern radius=0pt, pattern color={rgb, 255:red, 208; green, 2; blue, 27}]  (0,0) 
  -- (1,1.6) 
  -- (2.2,1) 
  -- cycle;
\draw (0,0) node[left]{$A$}
(1,1.6) node[above]{$D$}
(2,0) node[right]{$B$}
  (2.2,1) node[right]{$C$};
\end{tikzpicture}
\begin{tikzpicture}

\draw[fill=gray,fill opacity=0.3]  (0,0) 
  -- (2,0) 
  -- (2.2,1) 
  -- cycle;
\draw[fill=gray,fill opacity=0.3]  (0,0) 
  -- (1,1.6) 
  -- (2.2,1) 
  -- cycle;
\draw (0,0) node[left]{$A$}
(1,1.6) node[above]{$D$}
(2,0) node[right]{$B$}
  (2.2,1) node[right]{$C$};
\draw[pattern=redstripes,pattern size=6pt,pattern thickness=0.75pt,pattern radius=0pt, pattern color={rgb, 255:red, 208; green, 2; blue, 27}]
(0,0) 
  -- (1,1.6) 
  -- (2,0)
  -- cycle;
\end{tikzpicture}
\begin{tikzpicture}
\draw[fill=gray,fill opacity=0.2]  (0,0) 
  -- (1,1.6) 
  -- (2,0)
  -- cycle;

\draw[fill=gray,fill opacity=0.3]  (0,0) 
  -- (2,0) 
  -- (2.2,1) 
  -- cycle;
\draw[fill=gray,fill opacity=0.3]  (0,0) 
  -- (1,1.6) 
  -- (2.2,1) 
  -- cycle;
\draw (0,0) node[left]{$A$}
(1,1.6) node[above]{$D$}
(2,0) node[right]{$B$}
  (2.2,1) node[right]{$C$};
\draw[pattern=redstripes,pattern size=6pt,pattern thickness=0.75pt,pattern radius=0pt, pattern color={rgb, 255:red, 208; green, 2; blue, 27}] (1,1.6) 
  -- (2,0) 
  -- (2.2,1) 
  -- cycle; 
\end{tikzpicture}
$$
So we begin from the triangle $ABC$ and a budget of $3-3\alpha_1-\alpha_2$, and proceed to add vertex $D$ via the first operation, then add triangle $ABD$ via the third operation (costing $\alpha_1+\alpha_2$), and finally "close" the tetrahedron by adding triangle $BCD$ using the fourth operation (costing $\alpha_2$). 
All in all, there remains of the budget $4-6\alpha_1-4\alpha_2$, which, assuming $S^1 _1(\{\alpha_i\}_i)\geq 1$ is less than or equal to $1-\alpha_2$. That is the expected number of subcomplexes isomorphic to the tetrahedron is $O(n^{1-\alpha_2})$.
 \end{example}
We now transition to talking about collapsibility:
\begin{definition}\label{def:collapse}
    Let $C$ be a simplicial complex that has a pair of faces $\sigma\subset \tau$ such that $\sigma$ is a maximal facet of $\tau$, and no simplex contains $\sigma$ apart from $\tau$. $\sigma$ is then called a \textbf{free face} (of either $\tau$ or $C$). 
    Erasing both $\sigma$ and $\tau$ from the complex is called an \textbf{elementary collapse} and induces a homotopy equivalence between the old and new complex.
    $C$ \textbf{collapses onto \di \ $d$} if there exists a series of elementary collapses starting with $C$ and resulting in a simplicial complex with only simplexes of \di \ $d$ or lower.
    $C$ is called \textbf{collapsible} if it collapses onto a point. 
\end{definition}
 Collapsibility is a stronger condition than being null-homotopic. The following is a classic example:
\begin{example}
    This is a triangulation of the (topological) Dunce Hat:
    \begin{center}
        \includegraphics[trim={0 0 0 0cm},clip,scale=0.5]{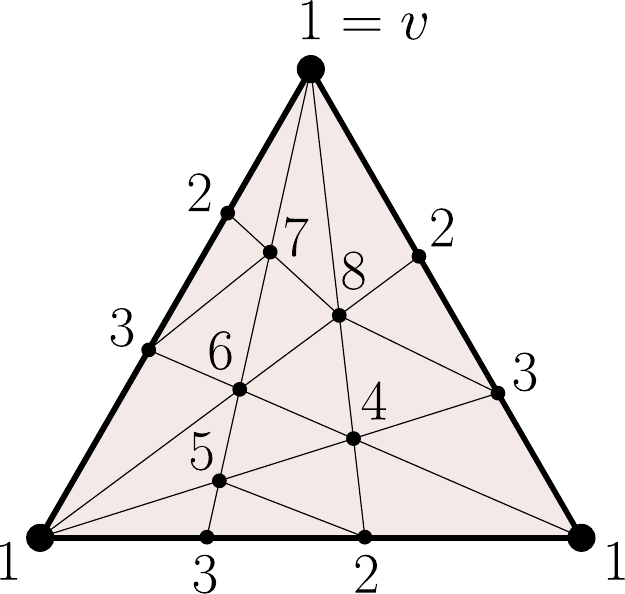}
    \end{center}
    (figure is from \cite{SANTAMARIAGALVIS2022103584}). The interior is glued to the edges such that the resulting map $S^1\to S^1$ is of multiplicity $1$, and so this  topological space is homotopic to a disk, and hence contractible. However, there is no free face of any \di, thus it is not collapsible.
\end{example}

\begin{lemma}\label{lemColapseComponents}
    A simplicial complex $C$ collapses onto \di \ $d$ if and only if all strong connectivity components of $C$ w.r.t \di\  $d+1$ collapse onto \di \ $d$.
\end{lemma}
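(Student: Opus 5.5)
The plan is to show that both directions reduce to the observation that faces of dimension at least $d+1$ only interact across strong connectivity components through faces of dimension at most $d-1$.

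First, I will pin down what "strong connectivity component w.r.t.\ dimension $d+1$" means when $C$ is not pure. I take the $(d+1)$-faces of $C$, split them into classes under $\sim'$, and attach to each class $K$ the subcomplex $C_K$ made of all faces of $C$ contained in some face of dimension at least $d+1$ whose $(d+1)$-faces lie in $K$. Two facts about this construction drive the argument:
\begin{enumerate}[(i)]
\item Every face of dimension at least $d+1$ lies in exactly one $C_K$. All its $(d+1)$-subfaces are pairwise $\sim$-related through shared $d$-faces, so they fall in one class.
\item Every $d$-face $\sigma$ contained in some $(d+1)$-face lies in exactly one $C_K$. Any two $(d+1)$-faces containing $\sigma$ are $\sim$-related, so they all belong to the same class.
\end{enumerate}
Consequently, the cofaces of $\sigma$ in $C$ coincide with its cofaces in $C_K$. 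In both $C$ and $C_K$, the only faces of dimension $\ge d$ that can be removed in a collapse toward dimension $d$ are faces of dimension $\ge d+1$ together with their free faces of dimension $\ge d$. Faces of dimension $\le d-1$ never need to be touched.

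For the "if" direction, I will run the collapse sequences of the various $C_K$ one after another inside $C$. The key check is that each elementary collapse $(\sigma,\tau)$ in the sequence for $C_K$ stays elementary in $C$. First I reduce to collapses with $\dim\sigma\ge d$: collapses with $\dim\tau\le d$ can be dropped, because removing extra low-dimensional faces only helps or is unnecessary, and collapses of higher-dimensional faces always have $\dim\sigma\ge d$. Then freeness in $C_K$ implies freeness in $C$, by (i) and (ii) applied to the current subcomplexes. Since $\sigma$ has dimension $\ge d$ and lies in a face of dimension $\ge d+1$, every coface of $\sigma$ in $C$ lies in $C_K$. Sequences for different $K$ touch disjoint sets of faces of dimension $\ge d$, so running them in any order is legitimate. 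After all of them, no faces of dimension $>d$ remain.

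For the "only if" direction, I start from a collapse sequence of $C$ onto dimension $d$ and restrict it to the pairs $(\sigma,\tau)$ with $\tau\in C_K$ and $\dim\tau\ge d+1$. Each such $\sigma$ has dimension $\ge d$, so $\sigma\in C_K$, and its cofaces in $C_K$ are a subset of its cofaces in $C$. Freeness therefore persists in the restricted sequence, which collapses $C_K$ onto dimension $d$.

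The main obstacle is the bookkeeping in that reduction step. I need to verify that (i) and (ii) continue to hold for the intermediate complexes, so that $\sigma$'s cofaces in $C$ stay inside $C_K$ after earlier collapses. I also need to handle pairs $(\sigma,\tau)$ with $\dim\tau=d$ correctly, including $d$-faces not covered by any $(d+1)$-face; these should simply be ignored.
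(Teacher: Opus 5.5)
Your proposal is correct and is essentially the paper's argument. For $\dim\sigma\ge d$, all cofaces of $\sigma$ lie in a single strong connectivity component w.r.t.\ dimension $d+1$, so freeness is local; restricting a collapse sequence of $C$ to each component gives one direction, and stringing the component sequences together gives the other. You are more careful than the paper, for instance by discarding collapses with $\dim\tau\le d$ so that shared low-dimensional faces cause no conflict. The bookkeeping you flag is immediate: cofaces in an intermediate complex are cofaces in $C$, and hence lie in the original $C_K$.
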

\begin{proof}
    A particular elementary collapse $\sigma\subset \tau$ is possible if and only if $\sigma$ is free. This is determined only by faces containing $\sigma$, which are all above the \di \ of $\sigma$.
    If $\dim(\sigma)\ge d$, all faces containing it are in the same component by definition, and the possibility of this collapse is only affected by faces in the same strong connectivity component w.r.t \di \ $d+1$ as $\tau$.
    
    From here both directions are clear: any sequence of elementary collapses collapsing $C$ gives rise to a sequence collapsing any strong connectivity component (merely the subsequence of collapses happening in that component), and sequences of collapses for all components may be strung together to produce a collapse of $C$ onto \di \ $d$.
\end{proof}

\section{Lower Model}\label{chap:lower}
\begin{proposition}\label{propBudCostLower}
    The terms \textbf{budget} and \textbf{cost} are well defined for $\underline{X}(n;p_1,...)$ where $p_i=n^{-\alpha_i}$.
    More concretely, assume that a simplicial complex $A$ has $a_0$ vertices, $a_1$ edges and so on. Then $\lim_{n\to \infty}\log_n(E(A\subset \underline{X}))=a_0-\sum_{i=1} ^{\dim(A)}a_i\alpha_i$.
\end{proposition}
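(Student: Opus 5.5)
The approach is to compute $E(A\subset \underline{X})$ directly from \Cref{defPartial} with two-sided bounds that differ only by factors independent of $n$. The key fact is that, in the lower model, a fixed injective map $\phi: V(A)\to [n]$ has its image $\phi(A)$ contained in $\underline{X}$ exactly when every face of $\phi(A)$ of dimension $i\ge 1$ was selected in the hypergraph $H$. Indeed, $\underline{X}$ is the maximal simplicial complex inside $H$, so a face lies in $\underline{X}$ iff all its faces (including itself) lie in $H$. Since $\phi(A)$ is closed under taking faces, the event $\phi(A)\subset\underline{X}$ is the same as every face of $\phi(A)$ being in $H$. These faces are distinct and chosen independently, so
$$P(\phi(A)\subset \underline{X})=\prod_{i\ge 1} p_i^{a_i}=n^{-\sum_i a_i\alpha_i}.$$
Vertices are always present, so they contribute no factor.

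Next I bound the sum over sets $S$ of size $a=a_0$. For a fixed $S$, $P(A\subset \underline{X}|_S)$ is at least the probability for one fixed bijection $V(A)\to S$. By a union bound it is at most $a_0!$ times that probability, since every bijection gives the same value $n^{-\sum a_i\alpha_i}$. Summing over the $\binom{n}{a_0}$ sets gives
$$\binom{n}{a_0}n^{-\sum a_i\alpha_i}\le E(A\subset\underline{X})\le a_0!\binom{n}{a_0}n^{-\sum a_i\alpha_i}.$$
Because $\binom{n}{a_0}=\Theta(n^{a_0})$ for fixed $a_0$, taking $\log_n$ and letting $n\to\infty$ removes the constants and yields $a_0-\sum_{i=1}^{\dim A}a_i\alpha_i$.

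Well-definedness of budget and cost then follows. The budget is this formula applied to $A=\Delta^d$. The cost of $A\mapsto A'$ is $\log_n E(A)-\log_n E(A')$, and since both limits exist, the limit of the difference exists and equals $(a_0-a'_0)-\sum_i(a_i-a'_i)\alpha_i$.

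The only step that needs care is the first one: checking that the event "contained in $\underline{X}$" for a closed subcomplex reduces exactly to independent hypergraph events, with no extra dependence on faces outside $\phi(A)$. This holds because membership of a face in $\underline{X}$ depends only on its own faces. If some $\alpha_i$ is infinite, i.e. $p_i=0$, I will note separately that the limit is $-\infty$ when $a_i>0$, which is consistent with the formula.
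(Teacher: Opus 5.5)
Your proposal is correct and follows essentially the same route as the paper. For a fixed injective vertex map you get the probability $\prod_{i\ge 1} p_i^{a_i}$ directly from independence of the hypergraph faces, where the paper writes it as a chain of conditional probabilities $P(B_i\mid B_1\cap\dots\cap B_{i-1})=p_i^{a_i}$. Both then use the same sandwich $\binom{n}{a_0}n^{-\sum a_i\alpha_i}\le E(A\subset\underline{X})\le a_0!\binom{n}{a_0}n^{-\sum a_i\alpha_i}$, and both conclude that the cost depends only on face counts.
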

\begin{proof} 
    For a particular choice of an injective map $f$ from the vertices of $A$ to $a_0$ vertices of $\underline{X}$, the probability that this map extends to a map of complexes may be written as follows: 
    Define $B_k$ to be the event that $f$ sends all $k$-faces in $A$ to $k$-faces in $\underline{X}$. Then we are interested in $P\prs{\bigcap_{i=1} ^{\dim(A)}B_i}$.
    $$P\prs{\bigcap_{i=1} ^{\dim(A)}B_i}=P(B_1)P(B_2\mid B_1)P(B_3\mid B_2\cap B_1)...=\prod_{i=1} ^{\dim(A)} P\prs{B_i\middle\lvert\bigcap_{j=1} ^{i-1}B_j}$$
    by definition of conditional probability. But in the lower model $P(B_i\mid \bigcap_{j=1} ^{i-1}B_j)=p_i ^{a_i}=n^{-a_i\alpha_i}$. There are $\binom{n}{a_0}$ subsets of $[n]$ of size $a_0$, and for each such subset there are $a_0!$ possible maps $f$ (if $A$ has symmetries these maps might induce the same map on complexes, but this does not affect the asymptotics). 
    Therefore, 
    $$\binom{n}{a_0}n^{-\sum_{i=1} ^{\dim(A)}a_i\alpha_i}\le E(A\subset \underline{X})\le a_0!\binom{n}{a_0}n^{-\sum_{i=1} ^{\dim(A)}a_i\alpha_i}$$ 
    and so $\log_n(E(A\subset \underline{X}))\xrightarrow{n\to \infty} a_0-\sum_{i=1} ^{\dim(A)}a_i\alpha_i$.\par
    This generalizes the calculation from \cite{Fowler19} for the simplex, giving $E(\Delta^d\subset \underline{X})=\binom{n}{d+1}n^{-\sum_{i=1} ^{d}\binom{d+1}{i+1}\alpha_i}$ and a budget $\log_n(E(\Delta^d\subset \underline{X}))\xrightarrow{n\to \infty} d+1-\sum_{i=1} ^{d+  1}\binom{d+1}{i+1}\alpha_i$.\par
    As for expansion operations, since $\lim_{n\to \infty}\log_n(E(A\subset \underline{X}))$ depends only on the number of faces of each \di, an expansion operation $A\mapsto A\cup_Q \Delta^d$ increases the number of $i$-faces by $\binom{d+1}{i+1}-q_i$, where $q_i$ is the number of $i$-faces in $Q$. So the cost is well defined, and only depends on the type of expansion operation.
\end{proof}

\begin{remark}
    A corresponding result is also true for any $p_i$ asymptotically comparable to all $n^{-a}$. Lower order terms can be ignored, while if $p_i=o(n^{-a})$ for all $a$, we can say for the sake of \Cref{propBudCostLower} that $\alpha_i=\infty$, and similarly for $p_i=\omega(n^{-a})$, $\alpha_i=0$.
    
    Note that there are in general multiple choices of a growth process for a strongly connected complex. 
\end{remark}

\begin{remark}
    It is at this point that we may give an interpretation to the expressions $S^k _1(\{\alpha_i\}_i), S^k _2(\{\alpha_i\}_i)$ from \Cref{thmFowler}. $S^k _1(\{\alpha_i\}_i)
    \geq 1$ is the requirement that the vertex adding expansion operation of \di\ $k+1$ decreases the expected number of appearances of the subcomplex (as we prove in \Cref{opDecreasesE}).
    This means that $k+1$ \di al components are more common the fewer vertices they have.
    In terms of $k$-cohomology, this condition ensures that $k$-cycles are unlikely to be 'filled' by $(k+1)$-faces.
    
    Meanwhile, $S^k _2(\{\alpha_i\}_i)<k+2$ is the requirement that the complex $\partial(\Delta^{k+1})$ is likely to appear as a subcomplex.
    This is the smallest $k$-cycle, as well as a prerequisite for ($k+1$)-faces to appear at all.
\end{remark}

\begin{corollary}\label{opDecreasesE}
    Assume $S_1 ^k(\{\alpha_i\}_i)\geq 1$ and $m> k$. For a strongly connected $m$-\di al complex $D$, denote by $D'$ the result of some expansion operation performed on $D$, adding (among others) an $l$-\di al simplex. 
    If either $S_1 ^k(\{\alpha_i\}_i)> 1$ or the operation does not add vertices and $\alpha_l >0$, then $E(D')= o(E(D))$.
\end{corollary}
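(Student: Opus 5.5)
The plan is to reduce the statement to a sign computation using \Cref{propBudCostLower}. Write the expansion operation as $D\mapsto D'=D\cup_Q\Delta^l$ with $l\ge m>k$. For each $i$, the number of $i$-faces grows by $c_i=\binom{l+1}{i+1}-q_i\ge 0$. The number of vertices grows by $v=c_0$. By \Cref{propBudCostLower},
$$\lim_{n\to\infty}\log_n\frac{E(D')}{E(D)}=v-\sum_{i\ge 1}c_i\alpha_i .$$
The proof of \Cref{propBudCostLower} gives $E(A\subset\underline{X})=\Theta\bigl(n^{a_0-\sum a_i\alpha_i}\bigr)$ with constants depending only on $A$. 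Hence it suffices to show that this exponent is strictly negative. Then $E(D')/E(D)=O(n^{-\varepsilon})=o(1)$. Throughout I use that all $\alpha_i\ge 0$, the standing assumption for the lower model.

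\emph{Case 1: the operation adds no vertices and $\alpha_l>0$.} Here $v=0$. The added $l$-simplex is a new face, since otherwise nothing is added, so $c_l\ge 1$. Every term $c_i\alpha_i$ is nonnegative, so the exponent is at most $-c_l\alpha_l\le-\alpha_l<0$.

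\emph{Case 2: $S_1^k>1$.} If $v=0$ we would still need some positive new term. So I would mainly handle $v\ge1$, and treat $v=0$ separately below. Since $D$ is strongly connected $m$-dimensional, $Q$ contains an $(m-1)$-face, and $m-1\ge k$. Hence $Q$ contains a $k$-face $\sigma$ of the old complex, and its $k+1$ vertices are old. For each new vertex $w$ and each nonempty $\tau\subseteq\sigma$, the face $\{w\}\cup\tau$ is a new face of dimension $|\tau|$. These faces are distinct for distinct pairs $(w,\tau)$. Counting them gives
$$\sum_{i\ge1}c_i\alpha_i\;\ge\; v\sum_{i=1}^{k+1}\binom{k+1}{i}\alpha_i \;=\; v\,S_1^k ,$$
so the exponent is at most $v(1-S_1^k)<0$ when $v\ge1$.

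For $v=0$ under the hypothesis $S_1^k>1$ alone, I need some new face with positive $\alpha_i$. I would argue as follows. The new $l$-simplex contributes a new $l$-face, and possibly new lower faces. If every new face has $\alpha_i=0$, then the bound does not follow. So I expect the intended reading of the hypothesis is "either $S_1^k>1$ and the operation adds vertices (or $\alpha_l>0$), or $\alpha_l>0$ with no vertices added". I would state explicitly that the zero-vertex case always requires $\alpha_l>0$ (or some new face with $\alpha_i>0$). The genuine content is then the vertex-adding count above.

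The main obstacle is the disjoint-count step. It requires the chosen $k$-face $\sigma$ to lie entirely in the old complex, and its joins with distinct new vertices to be distinct new faces. This is exactly where $m>k$ is used, since it guarantees $\dim Q\ge m-1\ge k$.
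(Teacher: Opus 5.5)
Your proof is correct and follows the paper's argument. The paper also splits into two cases. In the vertex-adding case it bounds the cost below by $S_1^{m-1}-1\ge S_1^k-1>0$, using joins of the new vertex with the shared $(m-1)$-face; you join with a $k$-subface instead and get $v(S_1^k-1)$ directly, which avoids monotonicity of $S_1^k$ in $k$. In the no-new-vertex case it likewise relies on $\alpha_l>0$ together with $\alpha_i\ge 0$. You are also right that $S_1^k>1$ alone does not cover operations adding no vertices (e.g.\ filling a triangle when $\alpha_2=0$), and your reading of the hypothesis is exactly how the paper's proof uses it.
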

\begin{proof}
    By the proof of \Cref{propBudCostLower}, $E(D)$ is of the form $n^{f_0(D)-\sum_{i=1} ^\infty f_i(D) \alpha_i}$ (up to a constant factor), where $f_i(D)$ denotes the number of $i$-\di al faces of $D$.
    If the operation added a vertex, this costs at least $S^{m-1} _1(\{\alpha_i\}_i) -1$, and since $S^{m-1} _1(\{\alpha_i\}_i)\ge S^k _1(\{\alpha_i\}_i)>1$ the result follows. 
    Otherwise, we know that $f_0(D')= f_0(D)$, $f_i(D')\ge f_i(D)$ for all $i$ and that $f_l(D')>f_l(D)$, and so $n^{f_0(D')-\sum_{i=1} ^\infty f_i(D') \alpha_i}=o(n^{f_0(D)-\sum_{i=1} ^\infty f_i(D) \alpha_i})$ as $\alpha_l >0$.
\end{proof}
\begin{corollary}    \label{finIsoTypes}
        For $\{\alpha_i\}_i$ where $S^k _1(\{\alpha_i\}_i)>1$ and $m>k$, only a finite number of isomorphism types of strongly connected components $A$ have $E(A\subset\underline{X})\not\to 0$.
\end{corollary}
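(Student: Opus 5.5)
The plan is to bound the number of vertices of any strongly connected component $A$ (w.r.t.\ \di\ $m$) with $E(A\subset\underline{X})\not\to 0$. Once the vertex count is bounded by a constant $N$ depending only on $\{\alpha_i\}_i$, $k$ and $m$, there are only finitely many simplicial complexes on at most $N$ vertices, so only finitely many isomorphism types can occur. The key input is \Cref{propBudCostLower}: $\log_n E(A\subset\underline{X})\to f_0(A)-\sum_i f_i(A)\alpha_i$. It therefore suffices to show that this exponent tends to $-\infty$ as $f_0(A)\to\infty$.

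First, fix a strongly connected $A$ and apply \Cref{vertBeforeAles} to obtain a growth process $\Delta^{D_0}=C_0\subset C_1\subset\dots\subset C_t=A$ starting from some maximal face. The exponent of $C_0$ is bounded above by $D_0+1\le$ a constant, once we note the following. If we assume, as implicit, that $p_j=0$ (i.e.\ $\alpha_j=\infty$) beyond some dimension $r$, then maximal faces have bounded dimension; otherwise I would argue directly. Any face of dimension $D\ge m$ that appears with non-vanishing expectation forces $\Delta^D$ to have exponent $D+1-\sum_i\binom{D+1}{i+1}\alpha_i$. Because $S_1^k>1$ and $\binom{D+1}{i+1}\ge\binom{D+1}{i}\cdot\frac{D+1-i}{i+1}$ grows, this exponent tends to $-\infty$ as $D\to\infty$. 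So only dimensions $D\le D_{\max}$ matter, since any complex containing a $\Delta^D$ has expectation at most that of $\Delta^D$ times a constant.

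Second, each step $C_i\mapsto C_{i+1}$ has nonnegative cost: the face counts weakly increase, and new vertices come only with full new faces. Each step that adds $v\ge1$ new vertices has cost at least $S^{m-1}_1-1\ge S^k_1-1=:\varepsilon>0$, as in the proof of \Cref{opDecreasesE}. More precisely, I will check that adding $v$ new vertices in one operation costs at least $v\varepsilon$. Each new vertex $x$ comes with the star of $x$ inside the new simplex, and these stars are disjoint as sets of faces. Since the new simplex meets $C_i$ in at least a $(m-1)$-face, each such star contains a copy of the cone over $\Delta^{m-1}$, costing $-1+\sum_{i}\binom{m}{i}\alpha_i=S_1^{m-1}-1$. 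Hence
\[
\log_n E(A)\le (D_{\max}+1)-\varepsilon\,(f_0(A)-D_{\max}-1),
\]
which is negative for $f_0(A)>N:=(D_{\max}+1)(1+1/\varepsilon)$. Then $E(A)\to0$.

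The main obstacle is the per-vertex accounting in a single expansion operation that adds several vertices at once. There I must ensure that the faces charged to distinct new vertices are disjoint and that each includes the full cone over an $(m-1)$-face. The disjointness follows by assigning a face to its lexicographically first new vertex. The inclusion holds because the glued $(m-1)$-face lies in the old complex, so the join of each new vertex with it is a new face. A secondary point is making the dimension bound $D_{\max}$ rigorous without assuming a cutoff $r$; if this fails I would simply invoke the standing finiteness assumption on the $p_i$.
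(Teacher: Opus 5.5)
Your proposal is correct and follows essentially the same route as the paper. Both arguments run a growth process from the budget, charge each new vertex at least $S_1^{m-1}-1\ge S_1^k-1>0$, deduce a uniform bound on the number of vertices, and conclude that only finitely many isomorphism types remain. The paper works only with pure $m$-dimensional components, where each vertex-adding step adds exactly one vertex at cost $c_m$, so your dimension cutoff $D_{\max}$ and your per-vertex charging for operations that add several vertices are a valid generalization rather than a different method. Conversely, the paper's closing remark that a.a.s.\ no component with too many vertices appears at all goes beyond what this statement requires.
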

    \begin{proof}
     By \Cref{opDecreasesE}, all expansion operations decrease the asymptotics of $E(\bullet\subset\underline{X})$.
     In particular, if we find a number $v_0$ such that any $m$-\di al component $A$ with $|A|>v_0$ has $E(A\subset\underline{X})\to 0$, the claim will follow.

    Denote the budget by $b$, and the cost of a vertex adding operation by $c_m:=\sum_{i=1}^m\binom{m}{i}\alpha_i-1\ge S_1^k-1>0$. If $A$ has $a$ vertices, $E(A\subset\underline{X})\sim n^{b-(a-m-1)c_m}$. For $a$ such that $c_m>\frac{b}{a-m-1}$, the exponent will be negative, so the expectation will go to 0.

    Furthermore, the probability of any component to have more vertices than  $\frac{b}{c_m} + m + 1$ is bounded by the sum of probabilities of the components with exactly  $\ceil{\frac{b}{c_m} + m + 1}$ vertices (which is finite), and still tends to 0.
    \end{proof}
\begin{lemma}\label{lem:S2entS1}
    $S_2 ^d> d+2$ entails $S_1^{d-1}>1$.
\end{lemma}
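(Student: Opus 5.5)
The plan is to compare the two sums coefficient by coefficient. Since $S_2^d=\sum_{i=1}^{d}\binom{d+2}{i+1}\alpha_i$ and $S_1^{d-1}=\sum_{i=1}^{d}\binom{d}{i}\alpha_i$ run over the same range of indices, it is enough to prove the termwise inequality
$$\binom{d+2}{i+1}\le (d+2)\binom{d}{i}\qquad (1\le i\le d).$$
Together with $\alpha_i\ge 0$, this gives $S_2^d\le (d+2)\,S_1^{d-1}$. Then $S_2^d>d+2$ forces $(d+2)S_1^{d-1}>d+2$, that is, $S_1^{d-1}>1$.

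For the termwise inequality, I would expand both binomials into factorials. This gives the ratio
$$\frac{\binom{d+2}{i+1}}{\binom{d}{i}}=\frac{(d+2)(d+1)}{(i+1)(d+1-i)}.$$
The claim is therefore equivalent to $(i+1)(d+1-i)\ge d+1$ for every $1\le i\le d$.

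The function $i\mapsto (i+1)(d+1-i)$ is a concave quadratic in $i$, so on the interval $[1,d]$ its minimum is attained at an endpoint. At $i=d$ it equals $d+1$, with equality. At $i=1$ it equals $2d\ge d+1$. This establishes the inequality for all $i$ in the range.

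The only point needing care is the sign of the $\alpha_i$. The termwise comparison multiplies the inequality by $\alpha_i$, which is valid only when $\alpha_i\ge0$. This holds in the setting where the lemma is used, since \Cref{thm:lowerCollapse} assumes $\alpha_i\ge 0$, and I would state this standing hypothesis explicitly in the proof. Beyond that, the argument is just the elementary binomial computation above; there is no real obstacle.
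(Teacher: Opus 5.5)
Your proof is correct and follows the paper's argument: compare coefficients termwise via the ratio $\frac{(d+2)(d+1)}{(i+1)(d+1-i)}\le d+2$ using $\alpha_i\ge 0$, with the only difference being that the paper checks $(i+1)(d+1-i)\ge d+1$ by a difference-of-squares identity rather than by concavity. Your non-strict inequality is the accurate one, since equality holds at $i=d$; the paper's strict claim ($<d+2$ for all $1\le i\le d$) is a slip that does not affect the conclusion.
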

\begin{proof}
    For all $1\le i\le d$, the coefficients of $\alpha_i$ in $S_2^d$ and $S_1^{d-1}$ are $\binom{d+2}{i+1},\binom{d}{i}$ respectively (the coefficients are $0$ outside of this range).
        $$\binom{d+2}{i+1}/\binom{d}{i}=\frac{(d+2)!i!(d-i)!}{d!(i+1)!(d+1-i)!}=\frac{(d+2)(d+1)}{(i+1)(d+1-i)}<d+2$$
        for all $1\le i\le d$, where the final inequality is true as $1\cdot(d+1)=(\frac{d+2}{2})^2-(\frac{d}{2})^2\le(\frac{d+2}{2})^2-(\frac{d}{2}-i)^2=(i+1)(d+1-i)$.
        All in all, coefficient-wise, $S_1^{d-1}\ge\frac{S_2^d}{d+2}$, so by assumption $S_1^{d-1}>1$. 
\end{proof}
We can now proceed to prove \Cref{thm:lowerCollapse}.

\begin{proof}[Proof of \Cref{thm:lowerCollapse}]
    By the proof of \Cref{propBudCostLower}, $S_2 ^d$ is the cost of $\partial(\Delta^{d+1})$, which has $d+2$ vertices. By assumption $d+2-S_2 ^d<0$, so by the first moment method a.a.s no simplexes of \di s $d+1$ and above appear. So it suffices to look at $d$-\di al strongly connected components.
    Since $S_2 ^d>d+2$, combining \Cref{lem:S2entS1} and \Cref{opDecreasesE} gives that all expansion operations of \di \ $d$ have a positive cost.
        
    For $d$-\di al strongly connected components to collapse to \di \ $d-1$, it is enough for a growth process to exist, such that every expansion operation has a free face, as in that case we may simply collapse the faces from last to first. 
    Since a.a.s no faces above \di\ $d$ are attached, the only expansions that have no free faces are those attached along $\partial(\Delta^{d})$.

    Assume $C$ is a strongly connected subcomplex in $\underline{X}$ with a growth process $\Delta^d=C_0\subset \dots \subset C_m=C$ such that $C=C_{m-1}\cup_{\partial(\Delta^{d})} \Delta^d$. This incurs no loss of generality, since any growth process featuring such an attachment at an intermediate stage can simply be truncated at that step.
    In $C_{m-1}$, the maximal faces of $\partial(\Delta^{d})$ come from distinct $d$-faces. Denote those faces by $f_0,...,f_d$, in order of addition in this growth process, and denote by $C_{t_i}$ the first stage containing $f_i$.

    By \Cref{propBudCostLower}, $\lim_n\log_n(E(C_{t_0}))\le d+1-\sum _{i=1}^d\binom{d+1}{i+1}\alpha_i$ (which is just the budget). 
    Further, 
    $$\lim_n\log_n(E(C_{t_1}))\le d+1-\sum _{i=1}^d\binom{d+1}{i+1}\alpha_i+1-\sum _{i=1}^d\binom{d}{i}\alpha_i$$
        as at a certain point a vertex adding operation must add the vertex of $\partial(\Delta^{d})$ not contained in $f_0$ (note- this claim is not contingent on this vertex being absent in $C_{t_0}$).
        Similarly, 
        $$\lim_n\log_n(E(C_{t_k}))\le d+2-\sum_{j=0}^{k-1}\sum _{i=1}^d\binom{d+1-j}{i+1-j}\alpha_i -\sum _{i=1}^d\binom{d+1-k}{i+1-k}\alpha_i,$$
        as we must add the $(j-1)$-face in $\partial(\Delta^{d})$ not contained in $\bigcup^{j-1}_{i=0}f_i$.

        All in all (under the convention that $\binom{b}{a}=0$ if $a<0$ or $a>b$): 
        $$\lim_n\log_n(E(C_{m-1}))\le d+2-\sum_{k=0}^{d}\sum _{i=1}^d\binom{d+1-k}{i+1-k}\alpha_i=d+2-\sum_{i=1}^{d}\sum _{k=0}^d\binom{d+1-k}{i+1-k}\alpha_i.$$
        For $i<d$, $\sum _{j=0}^d\binom{d+1-j}{i+1-j}=\binom{d+2}{i+1}$ by the hockey-stick identity. For $i=d$, $\sum _{j=0}^d\binom{d+1-j}{i+1-j}=d+1$. $C$ has one $d$-face more than $C_{m-1}$, and so 
        $$\lim_n\log_n(E(C))=-\alpha_d +\lim_n\log_n(E(C_{m-1}))\le d+2-S_2^d<0.$$
        We conclude $E(C)\to 0$, which finishes the proof by Markov's inequality.
\end{proof}

\subsection{Special Cases}
\begin{example}\label{exm:lower}
    The lower multiparametric model $\underline{X}(n;p_1,...)$ specializes to the Linial–Meshulam model $Y_d(n;p)$ by substituting $p_1,...,p_{d-1}=1, p_d=p, p_{d+1},...=0$.
    If $p=n^{-a}$ (and $1=n^0$), this means that $S_2^d=0+0+...+(d+2)a$, and by \Cref{thm:lowerCollapse} we get that $Y_d(n;p)$ collapses to \di\ $d-1$ if $a>1$.

    This provides an alternative proof of the same results in \cite{CCFK12} (for the 2-\di al case) and in \cite{AronshtamLinialLM13} (for the general case), although not addressing the $p\sim 1/n$ case.
    Further, it reinforces the claim in \cite{AronshtamLinialLM13} that simplex boundaries are the only obstruction for collapsibility (as this exactly corresponds to $S_2^d$).
\end{example}
\begin{example}\label{ex:clique}
    The clique/flag complex $X(G(n;p))$ coincides with $\underline{X}(n;p,1,1,...)$. Therefore, denoting $p=n^{-b}$, \Cref{thm:lowerCollapse} applies to any \di\ $d$ such that $S_2^d=\binom{d+2}{2}b+0>d+2$, or in other words for $b>\frac{2}{d+1}$.
    
    This is worse than the bound $b>\frac{1}{d+1}$ obtained in \cite{MALEN23}, but this is to be expected- the proof of that bound relies on the following result:
    \begin{theorem}[Theorem 3.1 in \cite{MALEN23}]
Fix $k \ge 0$. Let $X$ be a finite clique complex such that every strongly connected, pure $(k+1)$-dimensional subcomplex $S \subseteq X$ contains at least one vertex $v$ with $\deg_S(v) \le 2k + 1$.
Then $X$ collapses to \di\ $k$.
\end{theorem}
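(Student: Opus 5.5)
The statement is Theorem 3.1 of \cite{MALEN23}: if every strongly connected, pure $(k+1)$-dimensional subcomplex $S$ of the clique complex $X$ has a vertex $v$ with $\deg_S(v)\le 2k+1$, then $X$ collapses to dimension $k$. I would argue by induction on the number of faces of dimension above $k$, at each stage producing an elementary collapse that removes a face of dimension at least $k+1$.

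\textbf{Reduction to dimension $k+1$.} First I would collapse away all faces of dimension $\ge k+2$, top dimension first. Here I would use that the hypothesis passes to subcomplexes: a strongly connected pure $(k+1)$-dimensional subcomplex of any subcomplex of $X$ is also one of $X$. The clique property is not inherited by arbitrary subcomplexes, so the induction has to be phrased carefully. I would prove the more general statement, for $X$ a clique complex, that any subcomplex $Y\subseteq X$ containing the full $k$-skeleton of $X$ collapses to dimension $k$, and inherit the degree condition from $X$.

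\textbf{Finding a free face.} Take a strongly connected component $S$, pure of dimension $k+1$ after the reduction, and the vertex $v$ with $\deg_S(v)\le 2k+1$. The link $\mathrm{lk}_S(v)$ is then a pure $k$-dimensional complex on at most $2k+1$ vertices. In a clique complex, the link of $v$ is the clique complex of the neighbourhood of $v$. The key combinatorial claim is that a $k$-dimensional complex on at most $2k+1$ vertices, sitting inside a flag complex, always has a free $(k-1)$-face. Equivalently, it collapses, because it is too small to contain a $k$-dimensional flag obstruction. The smallest flag $k$-cycle is the boundary of the $(k+1)$-dimensional cross-polytope, which has $2k+2$ vertices, and this is exactly why the bound $2k+1$ appears. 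I would prove the claim by showing that a pure $k$-dimensional subcomplex of a flag complex with no free $(k-1)$-face needs at least $2k+2$ vertices. The induction there is on $k$: pass to the link of a vertex $w$ inside the link, and use that in a flag complex each $k$-face containing $w$ requires vertices adjacent to $w$ and not in the face.

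\textbf{Lifting the collapse.} A free $(k-1)$-face $\sigma$ of $\mathrm{lk}_S(v)$ gives the free $k$-face $\sigma\cup\{v\}$ of $S$. By \Cref{lemColapseComponents}, collapsing it inside $S$ is a legitimate collapse of $X$. The complex shrinks, and the hypothesis still holds for all components of the result, so the induction closes.

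\textbf{Main obstacle.} The step I expect to be hardest is the lemma that a pure $k$-complex inside a flag complex with no free face has at least $2k+2$ vertices. Once subcomplexes are allowed, "no free $(k-1)$-face" does not directly force a cycle. I would first try to show such a complex has every $(k-1)$-face of degree $\ge2$, then use the flag property to build, greedily, $k+1$ pairwise non-adjacent pairs of vertices, as in the cross-polytope. If this fails, I would strengthen the induction to the statement "every vertex of a free-face-less pure flag-induced $k$-complex has degree $\ge 2k$".
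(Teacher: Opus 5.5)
Your argument has a genuine gap, and it sits in the step you call the key combinatorial claim. As stated, that claim is false. The boundary $\partial\Delta^{k+1}$ is a pure $k$-dimensional complex on $k+2\le 2k+1$ vertices (for $k\ge 1$). It lies inside the flag complex $\Delta^{k+1}$, yet it has no free $(k-1)$-face.

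Your procedure produces exactly this situation. Take $X=\Delta^{k+2}$ on $\{v,u_0,\dots,u_{k+1}\}$. It satisfies the hypothesis, since every $S$ has at most $k+3$ vertices and hence all degrees are at most $k+2\le 2k+1$. Your reduction step may collapse the top face against the facet opposite $v$, which leaves $v*\partial\Delta^{k+1}$. There $v$ is a valid low-degree vertex, but $\mathrm{lk}(v)=\partial\Delta^{k+1}$ has no free face, and your argument gives no way to exclude this choice of $v$.

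The strengthened induction statement is false outright. $Y=\partial\Delta^{k+2}\subseteq\Delta^{k+2}$ contains the full $k$-skeleton and inherits the degree condition, yet it is a $(k+1)$-sphere. This is the paper's remark right after the statement: $\partial(\Delta^{k+2})$ satisfies the degree condition but not the conclusion. So flagness must hold for the complex whose links you actually inspect, not merely for an ambient complex. The links must also keep their faces of dimension above $k$, rather than being cut down to pure $k$-skeleta. Your fallback about \emph{flag-induced} complexes does not rescue this, because $\mathrm{lk}_S(v)$ is not flag-induced. Separately, the reduction to dimension $k+1$ is asserted with no reason why top-dimensional faces should have free facets.

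The repair is to never strip the higher faces and to keep an invariant that preserves flagness.

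First prove that a flag complex on at most $2k+1$ vertices collapses to dimension $k-1$.
\begin{enumerate}
\item If some vertex is adjacent to all others, the complex is a cone and collapses to a point.
\item Otherwise every vertex $w$ has degree at most $2k-1$. The flag complex $\mathrm{lk}(w)$ then collapses to dimension $k-2$ by induction on $k$, and these collapses lift to $\mathrm{st}(w)$.
\item The induced flag complex on the remaining vertices is handled by induction on the number of vertices. The leftover faces through $w$ have dimension at most $k-1$, so they do not obstruct those collapses.
\end{enumerate}

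Then induct on the number of faces of dimension $\ge k+1$. The invariant is that every remaining strong connectivity component $T$ (w.r.t.\ dimension $k+1$) is a component of some induced, hence flag, subcomplex $X[W]$.
\begin{enumerate}
\item Let $v$ be the low-degree vertex of $T$. The faces of $T$ of dimension $\ge k+1$ through $v$ are the cones $v*\rho$, where $\rho$ ranges over a union of dimension-$k$ components of the flag complex $X[N_T(v)]$, which has at most $2k+1$ vertices.
\item These faces collapse away by the lemma together with \Cref{lemColapseComponents}. The collapses lift to $X[W]$ because every coface of $v*\sigma$ with $\dim\sigma\ge k-1$ already lies in $T$.
\item What remains of $T$ splits into components of the induced complex $X[W\setminus\{v\}]$, so the invariant persists and the induction closes.
\end{enumerate}
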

This theorem is reliant on $X$ being a clique complex- the complexes $\partial(\Delta^{k+2})$ satisfy the conditions required for $S$, but not the conclusion.  
Indeed, these simplex boundaries may arise as induced subcomplexes of $\underline{X}(n;n^{-b},p_2,p_3,...)$ if $p_2,p_3...=o(1)$, explaining the necessity of the condition that the $\alpha$'s are positive in \Cref{thmFowler}.
\Cref{thm:lowerCollapse} would still apply if $p_2,p_3...$ tend to zero slower than $n^{\beta}$, for $\beta$ sufficiently small (as $1/\log n$, for instance).
\end{example}

\section{Upper model}\label{chap:Upper}
In this section we transition to thinking about the upper multiparametric model $\overline{X}(n;p_1,...)$ where $p_i=n^{-\alpha_i}$ and $p_j=0$ for $j>r$, some maximal \di.
Unlike in the lower model, the components we wish to collapse may be of a significantly greater \di\ than what they collapse to, so we need a slight change in definitions:
\begin{definition}
    Let $C\subset C'$ be simplicial complexes. $C$ is called an \textbf{honest subcomplex} if no two maximal faces of $C$ are contained in the same maximal face of $C'$.
    If this is the case, we denote it by $C\subset_h C'$. Clearly $E(C\subset_h \overline{X})\le E(C\subset \overline{X})$. Replacing $\subset$ by $\subset_h$ in the definition of cost in \Cref{defBudgetCost}, we call this quantity the \textbf{honest cost} (The "honest budget" is identical to the budget as the honesty condition is trivial in the case of a single simplex).
\end{definition}
\begin{remark}
    Strong connectivity components w.r.t any \di \ are honest subcomplexes. The reason for using this definition rather than the components themselves is for ease of calculation.

    In the lower model, the non-honest cost decreases predictably with every expansion operation. This is no longer true in the upper model (as a complex with many maximal faces may be contained in a single simplex of a greater \di).
    In contrast, the honest cost does decrease predictably under honest containment, as we will see in \Cref{lem:upperBudgetCost}.
\end{remark}

Recall from \Cref{FNNotation} that $\beta_i:=i+1-\alpha_i$, $\beta:=\max\{\beta_i\}$, $l=\lfloor\beta \rfloor$. Recall as well that $\gamma_k:=\max_{i\ge k}\{\beta_i\}$ and $e_k:=\gamma_k-k$.

\begin{lemma}\label{lem:upperBudgetCost}
    Let $Z$ be a strongly connected complex w.r.t \di \ $l+1$. In $\overline{X}(n;p_1,...,p_r)$, $p_i=n^{
    -\alpha_i}$ we have the following:
    \begin{enumerate}
        \item $E(Z\subset \overline{X})=O(n^\beta)$. In particular, the budget is less than $l+1$.
        \item Let $Z'=Z\cup \tau, \tau\simeq\Delta^m, m>l$ be the result of an operation expanding $Z$ (w.r.t \di\ $l+1$) which increases the number of maximal faces. Then $E(Z'\subset_h \overline{X})=O(E(Z\subset_h \overline{X})n^{e_{l+1}+v+l-m})$, where $v$ is the number of new vertices added by the operation.
        In other words, the honest cost of such an operation is at least $m-e_{l+1}-v-l$ (note that $e_{l+1}<0$ and that $m\ge v+l$).
    \end{enumerate}
\end{lemma}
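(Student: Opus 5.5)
Part 1 reduces to counting facets, and part 2 to a union bound over which generating facets could cover the new maximal face.

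\emph{Part 1.} Let $\sigma$ be a maximal face of $Z$; by assumption $\dim\sigma = m_0 \ge l+1$. Any copy of $Z$ in $\overline{X}$ contains a copy of $\sigma$. In $\overline{X}$, a given $(m_0+1)$-subset of vertices spans a face exactly when some generating hyperedge of dimension $i\ge m_0$ contains it. There are $O(n^{i-m_0})$ such hyperedges of dimension $i$, so a union bound gives
\[
P(\sigma\in\overline{X})=O\Big(\sum_{i\ge m_0} n^{i-m_0}p_i\Big).
\]
Summing over the $\binom{n}{m_0+1}$ vertex sets yields $E(\Delta^{m_0}\subset\overline{X})=O(n^{\gamma_{m_0}})=O(n^\beta)$. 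The same count shows $E(Z\subset\overline{X})=O(n^{\beta})$. To get this, I fix the images of the vertices of $\sigma$ ($O(n^{m_0+1})$ choices, times $p_i n^{i-m_0}$ for the generating facet). I then bound the probability of the rest of $Z$ by $1$ times the number of placements of the remaining vertices. Doing this naively gives too many choices for the remaining vertices, so I bound instead by the total count of sub-configurations inside generating facets. The cleanest route: every copy of $Z$ sits inside a union of generating facets, and the expected number of a single facet is $O(n^{\beta})$ since $\beta=\max\beta_i$. Hence the expected number of copies of the first maximal face is $O(n^{\beta})$, and the growth process of \Cref{vertBeforeAles} together with part 2 (costs nonnegative) bounds $E(Z)$. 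The ``budget $<l+1$'' claim is just $\beta<l+1$.

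\emph{Part 2.} This is the main step. Fix an honest copy of $Z$ in $\overline{X}$ and the added simplex $\tau\cong\Delta^m$ with $v$ new vertices. By honesty, $\tau$ is a maximal face not contained in the same maximal face of $\overline{X}$ as any maximal face of $Z$. So $\tau$ must be covered by a generating hyperedge $\rho$ of some dimension $i\ge m$ that is distinct from those covering $Z$. Independence in $X(n;p_1,\dots)$ then lets the probability factor as $P(\text{honest }Z)\cdot P(\rho\text{ present})$, up to the union over choices. I count the choices:
\begin{itemize}
\item $O(n^{v})$ for the new vertices of $\tau$;
\item $O(n^{i-m})$ for the extra vertices of $\rho$;
\item probability $p_i=n^{-\alpha_i}=n^{\beta_i-i-1}$ for $\rho$.
\end{itemize}
The factor is therefore $\sum_{i\ge m} n^{v+\beta_i-m-1}=O(n^{v+\gamma_m-m-1})$. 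Using $m\ge l+1$ and $e_{k+1}\le e_k-1$, we get $\gamma_m-m=e_m\le e_{l+1}-(m-l-1)$. Hence the factor is $O(n^{e_{l+1}+v+l-m})$, which is the claim.

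The obstacle is making the independence and honesty step rigorous. The covering facet $\rho$ of $\tau$ must genuinely be a fresh independent hyperedge, not one already used to cover part of $Z$. Honesty is used exactly here: if $\rho$ contained a maximal face of $Z$, then $\tau$ and that face would share a maximal face of $\overline{X}$, contradicting $Z'\subset_h\overline{X}$. I would formalize this by summing over assignments of distinct generating hyperedges to the maximal faces of $Z'$ and applying the BK/independence product over distinct hyperedges.
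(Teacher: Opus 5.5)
Your Part 2 is essentially the paper's argument. The paper also conditions on $g(Z)$ being honest, notes that $g(\tau)$ must be covered by a generating simplex of some dimension $i\ge m$ containing no maximal face of $g(Z)$, and gets the factor $n^{\gamma_m-m-1}$. It then multiplies by $n^{v}$ for the new vertices and finishes with $e_m\le e_{l+1}-(m-l-1)$. For the factorization you do not need BK, which is ill-suited here because honesty is not a monotone event. The event $\{g(Z)\subset_h\overline{X}\}$ is determined by the generating simplices containing at least one maximal face of $g(Z)$, and $\rho$ is not among them, so plain independence suffices. Your simplex computation in Part 1 also matches the paper's: your union bound gives $O(n^{\gamma_{m_0}})$, where the paper gets $\theta$ via inclusion--exclusion.

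The gap is in extending Part 1 to a general $Z$. You bound $E(Z\subset\overline{X})$ by running a growth process and applying Part 2 at each step. But Part 2 controls only $E(\,\cdot\subset_h\overline{X})$, and $E(Z\subset_h\overline{X})\le E(Z\subset\overline{X})$ points the wrong way. Copies of $Z$ in which several maximal faces lie in one large generating simplex are exactly the ones Part 2 discards, and, as the paper remarks, non-honest cost does not decrease predictably in the upper model. So your argument yields only $E(Z\subset_h\overline{X})=O(n^{\beta})$. The paper's own proof of Part 1 treats only $Z=\Delta^d$, and the proof of \Cref{thm:uppercollapse} uses only the budget plus honest costs, so the honest version is what is needed downstream.

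To get the stated non-honest bound, your remark that every copy of $Z$ sits in a union of generating simplices needs one more ingredient. For each maximal face $Z_a$, choose a generating simplex $\rho_a\supseteq g(Z_a)$. Adjacent maximal faces share $l+1$ vertices, hence so do their $\rho$'s, so the distinct $\rho$'s can be ordered so that each one after the first meets an earlier one in at least $l+1$ vertices. The expected number of such families of distinct, hence independent, simplices of dimensions $i_1,\dots,i_k$ is $O\bigl(n^{\beta_{i_1}}\prod_{j\ge 2}n^{\beta_{i_j}-(l+1)}\bigr)=O(n^{\beta})$, since every $\beta_i<l+1$. There are finitely many overlap patterns because $k\le |M(Z)|$ and $i_j\le r$, and each family carries $O(1)$ copies of $Z$.
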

\begin{proof} \begin{enumerate}
    \item Consider a particular injective mapping $g$ of the vertices of $\sigma=\Delta^d, d\ge l+1$ to $[n]$. $g(\sigma)$ is a face in $\overline{X}$ if and only if $g(\sigma)$ is contained in a face in $X$ (the hypergraph).
Denote by $J_i$ the event that $g(\sigma)$ is contained in an $i$-\di al face in $X$. Then 
$$P(J_i)=\binom{n-d-1}{i-d}n^{-\alpha_i}-\sum_{j=0}^{i-d-1}\binom{n-d-1}{2(i-d)-j}n^{-2\alpha_i}+\dots=\theta(n^{i-\alpha_i-d})$$
where we use inclusion-exclusion, and the fact that the appearance of different simplexes in $X$ is independent. Since $\max(i+1-\alpha_i)=\beta<l+1$, we conclude, assuming $d\ge l+1$, that $i-\alpha_i-d<-1$, and so the first term dominates asymptotically.
Further, $c\sum_{j=0} ^\infty\frac{1}{n^j}=\theta(c)$, so the sum is of the claimed asymptotic class.
Using this we get 
$$P\prs{\bigcup_{i=d} ^r J_i}=\theta\prs{\sum_{i=d} ^r n^{i-\alpha_i-d}-\sum_{i\ge j\ge d} ^r n^{i+j-\alpha_i-\alpha_j-2d}+ \sum_{i\ge j\ge k\ge d}\dots}$$
where we use the same arguments again, and conclude 
$$P\prs{\bigcup_{i=d} ^r J_i}=\theta\prs{\sum_{i=d} ^r n^{i-\alpha_i-d}}.$$
By the same argument as in \Cref{propBudCostLower}, there are $\theta(n^{d+1})$ possible $g$'s, and so:
\begin{equation}\label{bugdetUpper}
    E(\Delta^{d}\subset \overline{X})=\theta(n^{d+1}(n^{-\alpha_d}+n^{1-\alpha_{d+1}}+...+n^{r-d-\alpha_r}))=\theta(n^{\max_{i\ge d}\{ i+1-\alpha_i\}})=\theta(n^{\gamma_d}).
\end{equation}

 But $\gamma_d\le \beta < l+1$, giving the budget.
 \item Denote $E(Z\subset_h\overline{X})=\zeta$, and suppose $Z'$ has $v$ vertices more than $Z$. Let $g$ be an injective map from the vertices of $Z'$ to $[n]$. $g(Z')$ is an honest subcomplex of $\overline{X}$ if and only if all the following are satisfied:
 \begin{enumerate}[label=\alph*)]
     \item $g(Z)$ is an honest subcomplex.
     \item $g(\tau)$ is a face in $\overline{X}$.
     \item No face of $\overline{X}$ contains both $g(\tau)$ and a maximal face of $g(Z)$.
 \end{enumerate}
 Let $\crb{Z_i}$ be the maximal faces of $Z$. Then by the same arguments as above
 $$P\prs{g(Z')\subset_h\overline{X}\mid g(Z)\subset_h\overline{X}}=$$$$\sum_{i=m} ^r \prs{\binom{n-m-1}{i-m}-\sum_{Z_j}\binom{n-|Z_j\cup \tau|}{i-|Z_j\cup \tau|+1}}n^{-\alpha_i}+o(n^{i-m-\alpha_i})=\theta(n^{\gamma_m-m-1}).$$
The difference of binomial coefficients is to exclude maximal faces containing both $g(\tau)$ and one of the $Z_j$ (containing more than one is excluded by the condition that $g(Z)$ is an honest subcomplex), and the asymptotic is achieved by the observation that $\sum_{Z_j}\binom{n-|Z_j\cup \tau|}{i-|Z_j\cup \tau|+1}=o\prs{\binom{n-m-1}{i-m}}$ as well.
$$P(g(Z')\subset_h\overline{X})=P(g(Z')\subset_h\overline{X}\mid g(Z)\subset_h\overline{X})\cdot P(g(Z)\subset_h\overline{X})=\theta\prs{P(g(Z)\subset_h\overline{X})n^{\gamma_m-m-1}}.$$
$E(Z\subset_h\overline{X})=\sum_{g|_Z}P(g(Z)\subset_h\overline{X})$ and similarly $E(Z'\subset_h\overline{X})=\sum_{g}P(g(Z')\subset_h\overline{X})$.
Therefore 
$$E(Z'\subset_h\overline{X})=\sum_{g}P(g(Z')\subset_h\overline{X})=\theta\prs{n^{\gamma_m-m-1}\sum_{g}P(g(Z)\subset_h\overline{X})}=$$
$$\theta\prs{n^{\gamma_m-m-1+v}\sum_{g|_Z}P(g(Z)\subset_h\overline{X})}=\theta\prs{n^{\gamma_m-m-1+v}E(Z\subset_h\overline{X})}.$$
Recall from \Cref{FNNotation} that $e_m=\gamma_m-m$, and that $e_m\le l+1-m+e_{l+1}$, and we get the desired result.
    \end{enumerate}
\end{proof}
Part 2 of the lemma should be understood in the following way- the new simplex must intersect with the old complex at least at $l+1$ vertices in order to maintain strong connectivity.
If all other vertices are new- the operation costs $e_{l+1}$, and we pay $1$ for any additional vertex of intersection. We formalize the concept in the next subsection.

\subsection{Redundancy}
\begin{definition}\label{def:redun}
    Let $Z_0\subset Z_1\subset...\subset Z$ be a growth process of a simplicial complex, strongly connected w.r.t \di \ $k$. The expansion operation $Z_i\cup _{C}\Delta^d= Z_{i+1}$ is said to have \textbf{redundancy} $t_i$ if $|Z_{i+1}|-|Z_i|=d+1-k-t_i$, i.e $t_i=d+1-(|Z_{i+1}|-|Z_i|)-k$, meaning $\Delta^d$ is attached along $t_i+k$ "old vertices"- vertices in $Z_i$. 
    The redundancy of $Z$ is said to be $R(Z):=\sum_i t_i$.
\end{definition}
\begin{remark}
    The redundancy of a complex w.r.t \di \ $k$ does not depend on the choice of the growth process. Indeed, denoting $ Z_{i+1}=Z_i\cup _{C}\sigma_{i+1}$, 
    $$|Z|=|Z_0|+\sum_{i=1}\prs{\dim(\sigma_{i})+1-k-t_i}=\sum_{\sigma\in M(Z)}(\dim(\sigma)+1)-(|M(Z)|-1)k-R(Z).$$
    Therefore
    \begin{equation}\label{eqDefRedun}
        R(Z)=\sum_{\sigma\in M(Z)}(\dim(\sigma)+1)-(|M(Z)|-1)k-|Z|
    \end{equation}
    which is independent of the process. Note- the above formula also does not require $Z$ to be strongly connected, and therefore can be used as the definition in this case. \textbf{Note: only faces of \di \ greater or equal to $k-1$ count as maximal for this purpose}. \par
    We may now reinterpret \Cref{lem:upperBudgetCost} as saying that an expansion operation $op$ has a cost of $R(op)-e_{l+1}$.
    The redundancy may be thought of as a measure of the complexity of a complex.
    Redundancy $0$ complexes are akin to "trees", as any new face adds the maximal number of vertices keeping it strongly connected w.r.t \di \ $k$.
\end{remark}
\begin{lemma}\label{lem:redunMonotonic}
    Let $C\subset Z$ be an induced subcomplex. Then (w.r.t $k$) $R(C)\le R(Z)$.
\end{lemma}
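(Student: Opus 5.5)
The plan is to work directly with the closed formula \eqref{eqDefRedun},
$$R(Z)=\sum_{\sigma\in M(Z)}(\dim(\sigma)+1)-(|M(Z)|-1)k-|Z|,$$
rather than with growth processes, since an induced subcomplex need not be strongly connected and the formula does not require this. Every induced subcomplex $C$ of $Z$ is reached by deleting the vertices of $Z\setminus C$ one at a time, and each intermediate complex is again an induced subcomplex of $Z$. It therefore suffices to show that passing from $Z$ to the induced subcomplex $Z-v$ on $V(Z)\setminus\{v\}$ does not increase $R$, and then to induct on $|Z|-|C|$.

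For a single vertex deletion, I will track how the three terms change. Only maximal faces containing $v$ are affected. Each such $\sigma$ is replaced by $\sigma\setminus\{v\}$, and exactly one of two things happens.
\begin{enumerate}[(i)]
\item \emph{Kept:} $\sigma\setminus\{v\}$ is still maximal and of dimension $\ge k-1$. The sum term drops by $1$.
\item \emph{Dropped:} $\sigma\setminus\{v\}$ is contained in another maximal face, or falls below dimension $k-1$. The sum term loses $|\sigma|$, and $|M|$ drops by $1$, which contributes $+k$.
\end{enumerate}
No other face changes its maximality status. Distinct maximal faces containing $v$ have distinct traces, and a trace cannot swallow a face avoiding $v$, since that face was maximal in $Z$. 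Together with the loss of one vertex, this gives
$$R(Z-v)-R(Z)=1-\#\{\text{kept}\}-\sum_{\sigma \text{ dropped}}\bigl(|\sigma|-k\bigr).$$
Every counted maximal face has $|\sigma|\ge k$, so every term in the last sum is nonnegative. It vanishes only when $\dim\sigma=k-1$.

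The conclusion $R(Z-v)\le R(Z)$ therefore follows as soon as $v$ lies in at least one maximal face of dimension $\ge k$. Such a face is either kept, contributing $-1$, or dropped with $|\sigma|-k\ge 1$. This holds in the situation of interest: by the definition of strong connectivity w.r.t.\ \di\ $k$, all maximal faces of $Z$ there have dimension $\ge k$, and for $v\in Z$ every vertex lies in some maximal face.

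The main obstacle is that this property need not persist along the deletion sequence. After several vertices are removed, a remaining vertex may lie only in traces of dimension exactly $k-1$, or even only in faces of dimension $<k-1$, which are not counted at all. Deleting such a vertex naively would raise $R$ by $1$. I would first try to choose the deletion order cleverly, for instance always deleting a vertex of $Z\setminus C$ that still lies in a face of dimension $\ge k$ in the current complex. Failing that, I would compare $Z$ and $C$ in one step, matching each counted maximal face of $C$ to a maximal face of $Z$ containing it and charging vertex losses to dimension losses. If neither works, the statement should be read, as in its use in the strongly connected setting of \Cref{lem:upperBudgetCost}, with vertices of $C$ understood as those lying in counted faces. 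In that case the per-vertex inequality above suffices.
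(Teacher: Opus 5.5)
Your set-up is the paper's own. The paper also deletes one vertex $w$ at a time and tracks the three terms of the closed formula for $R$, and its two cases ($|M(C)|=|M(Z)|$ and $|M(C)|<|M(Z)|$) amount to your identity $R(Z-v)-R(Z)=1-\#\{\text{kept}\}-\sum_{\text{dropped}}(|\sigma|-k)$. But your proposal ends before the proof does, and the obstacle you name cannot be engineered away. Read literally, with only faces of dimension $\ge k-1$ counted, the lemma is false for $k\ge 2$. Write $R(Y)=\sum_{\sigma\in M(Y)}(|\sigma|-k)+k-|Y|$. Take $Z=\Delta^k$, which is strongly connected w.r.t.\ $k$ with $R(Z)=1+k-(k+1)=0$, and let $C$ be one of its vertices. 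Then $C$ has no counted face, so $R(C)=k-1>0$. Since the endpoint $C$ is fixed, no deletion order and no one-step charging argument can help. Your third reading gives $R(C)=k$. Under that reading the per-vertex bound also breaks, because deleting $v$ can strand other vertices outside every counted face: with $k=2$, take the triangle $vab$ plus the edge $vd$; deleting $v$ raises $R$ from $-1$ to $0$ although $v$ lies in a $2$-face. The paper's proof has exactly the hole you found. The step \emph{this is a non-empty set} presumes $w$ lies in a counted maximal face. The bound $\dim(\sigma_j)+1\ge k$ only yields $R(C)-R(Z)\le 1-\#\{\text{kept}\}$, which is $+1$ when every face through $w$ is a dropped $(k-1)$-face. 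Such intermediate complexes do occur: for $k=2$, deleting $x$ from the strip $abc,bcx,cxd$ gives the triangle $abc$ plus the edge $cd$, with $R=-1$, and deleting $d$ then raises $R$ to $0$.

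What does hold is the version with hypotheses: if $Z$ is strongly connected w.r.t.\ $k$ and $C=Z[S]$ contains a face of dimension $\ge k-1$, then $R(C)\le R(Z)$. Neither hypothesis can be dropped: the example above shows the second is needed, and for $k=2$ two disjoint triangles versus one of them shows the first is. This version is easiest to prove by building up rather than tearing down.
\begin{itemize}
\item Choose a maximal face $\sigma_0$ of $Z$ with $|\sigma_0\cap S|\ge k$, and a growth process $Z_i=Z_{i-1}\cup\sigma_i$ of $Z$ starting from it, with redundancies $t_i=|\sigma_i\cap V(Z_{i-1})|-k\ge 0$.
\item Let $C_i$ be generated by the traces $\rho_j=\sigma_j\cap S$ for $j\le i$. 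Then $V(C_i)=S\cap V(Z_i)$, $C_m=C$, and $R(C_0)=0=R(Z_0)$.
\item If $\rho_i$ lies in a face of $C_{i-1}$, passing to $C_i$ changes nothing.
\item If not, and $|\rho_i|\ge k$, then $R$ rises by at most $a'-k\le t_i$, where $a'=|\rho_i\cap V(Z_{i-1})|$. Swallowed counted faces only lower the sum.
\item If not, and $|\rho_i|<k$, then $R$ changes by $-|\rho_i\setminus V(Z_{i-1})|\le 0$.
\end{itemize}
Each step therefore raises $R$ by at most $t_i$, so $R(C)\le\sum_i t_i=R(Z)$.
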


\begin{proof}
The general case will follow from the case where $C$ and $Z$ differ by a single vertex:
Suppose $C$ is the induced subcomplex on $Z\setminus\crb{w}$, for some vertex $w$.  

We observe how the expressions in \Cref{eqDefRedun} differ between $C$ and $Z$: Obviously $|C|=|Z|-1$.
If $|M(C)|=|M(Z)|$, $\sum(\dim(\sigma)+1)$ decreases by at least $\deg_M(w)$- the number of maximal faces containing $w$. This is a non-empty set, and so in this case $R(C)\le R(Z)$.

If $|M(C)|<|M(Z)|$ (it cannot increase as different maximal faces in $C$ come from different maximal faces in $Z$), suppose $\crb{\sigma_j\setminus w}_{j\in J}$ are no longer maximal.
Then $k|M(C)|=k\prs{|M(Z)|-|J|}$, but 
$$\sum_{\sigma\in M(C)}(\dim(\sigma)+1)\le \sum_{\sigma\in M(Z)}(\dim(\sigma)+1)-\sum_{j\in J} (\dim(\sigma_j)+1)$$
(where the inequality is due to the possible reductions in \di s).
$\dim(\sigma_j)+1\ge k$ for all $j$, and comparing with \Cref{eqDefRedun} grants the result in this case as well.
\end{proof}

\begin{lemma}\label{lem:gluingcollapse}
    Let $C'=C\cup_G \sigma$, where $\sigma$ is a simplex. If $G$ has at most $c$ maximal faces
    of \di\ greater or equal to $c-1$, $C'$ collapses onto $C$ and perhaps some faces of \di\ below $c$.
\end{lemma}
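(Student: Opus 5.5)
The plan is to prove a slightly stronger statement by induction on $c$. Write $G=C\cap\sigma$. Let $g_1,\dots,g_s$, $s\le c$, be the maximal faces of $G$ of dimension $\ge c-1$; all other maximal faces of $G$ have dimension $<c-1$. The goal is to remove, by elementary collapses, every face of $\sigma$ that is not in $G$ and has dimension $\ge c$, without ever touching a face of $C$. The only tool needed is the \emph{cone collapse}. If $v$ is a vertex of $\sigma$ and $\mathcal{F}$ is a family of faces $\rho\subset\sigma\setminus\{v\}$ such that neither $\rho$ nor $v*\rho$ lies in $G$, then the pairs $(\rho,v*\rho)$ can be collapsed in order of decreasing $\dim\rho$. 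When a pair is processed, every other coface of $\rho$ in $\sigma$ has the form $\rho\cup\{w\}$ or $v*(\rho\cup\{w\})$ with $w\neq v$. Both are faces outside $G$ (they contain $\rho\notin G$) and have already been removed, and no face of $C$ contains $\rho$, since $\rho\notin G$. So $\rho$ is free in $v*\rho$.

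\textbf{Step 1 (base and first cone).} If $s=0$, pick any vertex $v$ of $\sigma$. If $s\ge1$, pick $v\in\sigma\setminus g_1$, which is possible because $g_1\subsetneq\sigma$ (otherwise $\sigma\subset C$ and there is nothing to prove). Apply the cone collapse to all $\rho\subset\sigma\setminus\{v\}$ with $\rho\notin G$. Every face of $\sigma$ outside $G$ is either such a $\rho$ or $v*\rho$ with $\rho\notin G$, except the faces $v*\rho$ with $\rho\in G$ but $v*\rho\notin G$. So what remains of $\sigma$ outside $C$ is exactly $v*K\setminus G$, where $K:=\{\rho\in G:\rho\subset\sigma\setminus\{v\}\}$ is the restriction of $G$ to the facet opposite $v$. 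The remaining complex is therefore $C\cup v*K$, glued along $K\cup v*L$ with $L:=\mathrm{lk}_G(v)\subset K$.

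\textbf{Step 2 (reduction by one dimension).} Elementary collapses $(\alpha,\beta)$ of $K$ that avoid $L$ lift to collapses $(v*\alpha,v*\beta)$ of $v*K$ relative to $K\cup v*L$, raising dimension by one. It therefore suffices to show that $K$ collapses onto $L$ together with faces of dimension $<c-1$. Here is how the count drops. The maximal faces of $L$ of dimension $\ge c-2$ are of the form $g_j\setminus\{v\}$ with $v\in g_j$, which excludes $g_1$, so there are at most $c-1$ of them. Faces of $K$ not in $L$ are faces $\rho\subset\sigma\setminus\{v\}$ of $G$ with $v*\rho\notin G$. I would apply the induction hypothesis (with $c-1$, inside the simplex $\sigma\setminus\{v\}$) to each maximal face of $K$ in turn, attached to $L$ plus what has been attached so far. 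To make this go through I will state the induction hypothesis for a union of simplices attached one at a time, rather than a single simplex.

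\textbf{Main obstacle.} The delicate point is exactly this bookkeeping in Step 2. $K$ is not a single simplex, so the induction must be formulated as: ``if $A\cup\tau$ is obtained from $A$ by attaching a simplex along a subcomplex with at most $c$ maximal faces of dimension $\ge c-1$, then it collapses onto $A$ plus faces of dimension $<c$.'' One then has to check that when the faces $g_j\cap(\sigma\setminus\{v\})$ are attached successively, each attachment meets the previous stage in at most $c-1$ large maximal faces. If that count fails, the fallback is to avoid the recursion entirely. One would choose vertices $v_j\in\sigma\setminus g_j$ for all $j$, build the iterated matching $\tau\mapsto\tau\triangle\{v_j\}$ (using the first $j$ for which this is defined), and verify acyclicity and that every unmatched face outside $G$ contains at most $c$ of the $v_j$ beyond a face of $G$, and so has dimension $<c$. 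A generic face of $G$ has dimension $<c-1$, since the large ones are killed by the choice of the $v_j$.
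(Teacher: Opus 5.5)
\noindent\emph{Comparison with the paper.} Your route differs from the paper's. The paper runs a double induction on the number of maximal faces of $G$ and on the number of vertices. At each step it inserts an auxiliary simplex. Either it uses $f*v$, chosen so that $G\cup(f*v)$ has a common vertex $v$ and $\sigma$ collapses towards $v$. Or it uses the join $f_1*f_2$ of two maximal faces missing a common vertex: $\sigma$ collapses onto $C\cup(f_1*f_2)$ (fewer maximal faces), and then $f_1*f_2$ collapses onto $C$ (fewer vertices). You instead induct on $c$ by coning off a vertex and passing to the opposite facet.

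Your Step 1 and the lifting in Step 2 are sound. The cone collapse needs $\mathcal F$ to be closed under enlargement, which holds for the family you use. After it, the complex is $C\cup v*K$, glued along $G=K\cup v*L$. A lifted pair $(v*\alpha,v*\beta)$ is legal because $v*\alpha\notin C$ when $\alpha\notin L$. Since $v*L\subset C$, what finally remains is $v*T$ with $\dim T<c-1$. However, the proof stops exactly at the step you call the main obstacle. The count is never checked, and the fallback matching is only a sketch: acyclicity and the dimension bound are asserted, not proved. So the fallback cannot stand in for the count.

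\noindent\emph{The missing count, and the base case.} The count does hold, and it is what is missing. Put $J_1=\{j: v\in g_j\}$ and $J_0=\{j: v\notin g_j\}$, so $|J_0|+|J_1|=s\le c$. Let $A_0$ be $L$ together with the small maximal faces of $G$ avoiding $v$, and attach the $g_j$, $j\in J_0$, one at a time. If $g$ is the $i$-th one, $g\cap A_{i-1}$ is generated by:
\begin{enumerate}
\item the faces $g\cap(g_j\setminus v)$ for $j\in J_1$;
\item the faces $g\cap g'$ for the $i-1$ earlier $g'$;
\item intersections of $g$ with small maximal faces $h$ of $G$ (or with $h\setminus v$). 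These are proper faces of $h$, so they have dimension at most $c-3$.
\end{enumerate}
Hence the attaching region has at most $|J_1|+i-1\le s-1\le c-1$ maximal faces of dimension $\ge c-2$, and the case $c-1$ applies. Collapse in reverse order of attachment. A leftover face of a later $g''$ is not contained in an earlier $g$, so it meets $g$ in a face of dimension at most $c-3$ and does not affect the count. The bound comes from $g$ not being counted against itself; your choice $v\notin g_1$ plays no role in it.

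You must also anchor the induction at $c=1$ directly. There $G$ is empty or a single face $g_1$, and $\sigma$ collapses onto $g_1$, or to a vertex. At $c=1$ your Step 2 would invoke the case $c=0$, which is false: a simplex attached along nothing cannot collapse into faces of negative dimension. Concretely, with $v\notin g_1$ the lifted collapses of $K=g_1$ leave $v*u$, where $u$ is the vertex that $g_1$ collapses to. This is an edge of dimension $1$, not below $c=1$; it needs one extra collapse of the free vertex $v$, or the choice $v\in g_1$.

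\noindent\emph{What each route buys.} With these two additions your argument is complete. It explains the dimension bound more transparently than the paper's, since leftovers are cones over leftovers one dimension lower. The paper's version makes it visible that $\partial\sigma$, the only family of facets with no common vertex, is the obstruction.
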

\begin{proof} For two faces $f_1,f_2$, denote by $f_1*f_2$ their \textbf{join}, i.e the simplex composed of all of their vertices.

By \Cref{lemColapseComponents}, only faces of \di\ greater or equal to $c-1$ affect collapse to this \di, and so we may simplify the assumption to $G$ having at most $c$ maximal faces.\par
    We proceed by induction on the number of maximal faces in $G$: For a single maximal face the claim is clear.
    Assume we proved the claim for any $G$ with less than $k\le c$ maximal faces.
    WLOG we assume all vertices in $\sigma$ are in $G$ (any new vertex is free, and so may be gotten rid of by collapses). We proceed by induction on the number of vertices $g$ in $G$. For $g< k+1$  the claim is trivial. We divide into cases:
    \begin{enumerate}
    \item If there exists a vertex $v$ common to all maximal faces of $G$, the claim follows as we may simply perform collapses towards $v$ (i.e erasing all pairs of the form $(\tau,\tau\cup \crb{v})$, where $v\not\in\tau$ and $\tau\not\subset C$).
        \item If each vertex has exactly 1 maximal face of $G$ not containing it, then either there exists a pair $(f,v)$ ($f$- maximal, $v$-vertex) where $f\cup v \ne G$, or all maximal faces of $G$ are facets of $\sigma$.
    \begin{enumerate}
        \item In the first case, $G\cup (f*v)$ has a centre $v$ (as in the previous case), and so $C'$ collapses to $C\cup (f*v)$. $(f*v)$ has fewer vertices than $G$, and $(f*v)\cap G$ has no more maximal faces (as it is an induced subcomplex of $G$). The claim follows by induction.
    \item  The second case cannot happen, as the only set of facets without a centre is the complete one, which would have more than $k$ elements. \end{enumerate}

    \item Suppose there exists a vertex $w$ not contained in at least 2 maximal faces of $G$-$f_1,f_2$. $G\cup (f_1 *f_2)$ has fewer maximal faces than $G$, so $C'$ collapses by induction to $C\cup (f_1 *f_2)$ and remnants of \di\ less or equal to $k-2$.
    $G\cap (f_1 *f_2)$ has less vertices than $G$ (as $w$ is not there), and the claim follows again by induction. \qedhere
    \end{enumerate}
\end{proof}

\begin{lemma}\label{lem:redunColapse}
    Let $Z$ be a component strongly connected w.r.t \di \ $k$ in a complex $Z'$. If $R(Z)\le k-1$, then $Z$ collapses onto \di \ $k-1$ (i.e disappears as a component).
\end{lemma}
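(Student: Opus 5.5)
We argue by induction on the number of maximal faces of $Z$ (counting, as in the remark after \Cref{def:redun}, only faces of \di\ at least $k-1$), using a growth process from \Cref{vertBeforeAles}. The goal is to collapse away the last simplex added, reducing to a smaller strongly connected complex of no larger redundancy. Since $Z$ is a strong connectivity component of $Z'$, \Cref{lemColapseComponents} lets us work inside $Z$ alone: collapses of faces of \di\ at least $k-1$ only see faces in the same component.

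Fix a growth process $Z_0\subset\dots\subset Z_m=Z$ with $Z_{i+1}=Z_i\cup_{G_i}\sigma_{i+1}$ and redundancies $t_i\ge 0$ summing to $R(Z)\le k-1$. The last step attaches $\sigma:=\sigma_m$ along $G:=G_{m-1}=\sigma\cap Z_{m-1}$. Then $G$ is the full simplex on the $k+t_{m-1}$ old vertices of $\sigma$, intersected with $Z_{m-1}$. This is an induced subcomplex of $Z_{m-1}$ on $k+t$ vertices, where $t:=t_{m-1}\le k-1$.

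The key step is to bound the number of maximal faces of $G$ of \di\ at least $k-1$ by $k$, so that \Cref{lem:gluingcollapse} (with $c=k$) collapses $Z$ onto $Z_{m-1}$ plus faces of \di\ below $k$, i.e.\ below $k-1$ in the relevant sense. Let $H$ be $G$ with only such faces counted as maximal, and view $H$ as an (induced, possibly not strongly connected) complex. Formula \Cref{eqDefRedun} gives
\[
R(H)=\sum_{\tau\in M(H)}(\dim\tau+1)-(|M(H)|-1)k-(k+t).
\]
Each maximal face has at least $k$ vertices, so $R(H)\ge |M(H)|k-(|M(H)|-1)k-k-t=-t$. This bound goes the wrong way, so we need a sharper estimate.

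A better route is to compare redundancies directly. $H$ lives on the induced subcomplex of $Z_{m-1}$ on the old vertices, so \Cref{lem:redunMonotonic} gives $R(H)\le R(Z_{m-1})=R(Z)-t\le k-1-t$. If $H$ has $h\ge 2$ maximal faces, each of size $k$ or more, on only $k+t$ vertices, we can bound $R(H)$ from below by $hk-(h-1)k-(k+t)+(\text{excess})$. The excess must be controlled by a union-size count: $h$ distinct sets of size at least $k$ inside a $(k+t)$-set. Combining the two bounds should force $h\le k$. If that is too weak, the fallback is to use a different last simplex. Alternatively, one can note that \Cref{lem:gluingcollapse} only needs a vertex-centre or an inductive structure, and re-run its case analysis with the redundancy bound in place of the face count. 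After collapsing $\sigma$, the remainder $Z_{m-1}$ is strongly connected with $R\le k-1$, so induction applies. The base case is a single simplex, which collapses onto \di\ $k-1$ trivially.

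The main obstacle is exactly this inequality: showing that $R(Z)\le k-1$ forces the attaching complex of some simplex in the growth process to have at most $k$ maximal faces of \di\ at least $k-1$. I would first try to prove $|M(H)|\le t+1\le k$ by induction on $t$, using \Cref{lem:redunMonotonic} on $Z_{m-1}$ together with the fact that each additional maximal face in $G$ consumes at least one unit of redundancy.
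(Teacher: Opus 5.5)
Your outer structure matches the paper's: collapse the simplices of a growth process in reverse order using \Cref{lem:gluingcollapse} with $c=k$. This is legitimate as soon as each attaching complex has at most $k$ maximal faces of dimension $\ge k-1$. That bound is the entire content of the lemma, and you leave it unproved. Neither route you sketch can supply it.

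\emph{Route 1.} Combining $R(H)\le R(Z_{m-1})\le k-1-t$ with the formula yields only $\sum_{\tau\in M(H)}(|\tau|-k)\le k-1$. This says nothing about $h=|M(H)|$ when the maximal faces are $(k-1)$-simplices. For example, $H=\partial\Delta^k$ has $h=k+1$, $t=1$, $R(H)=-1$, and satisfies both of your inequalities. Monotonicity discards exactly the information you need: what the growth process of $Z_{m-1}$ paid to create many distinct maximal faces on a $(k+t)$-vertex set.

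\emph{Route 2.} Your fallback claim $|M(H)|\le t+1$ is false. For $k=3$, grow $abcx$, then $abdx$ ($t=0$), then $acdx$ ($t=1$), then attach $\sigma=abcdz$ ($t=1$). Here $R(Z)=2=k-1$, yet $G$ has the three maximal triangles $abc,abd,acd$, so $h=3>t+1$. The extra redundancy was paid at an earlier step, not in $t$.

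The missing idea is a charging argument over the whole process, which is what the paper does.
\begin{enumerate}
\item For each maximal face $F_j$ of $G$, let $\sigma_{i_j}$ be the first simplex of the process containing it. These simplices are distinct, since two maximal faces of $G$ inside one simplex would span a face of $G$. Order them so that $i_1<i_2<\cdots$.
\item Let $\delta_j$ be the number of vertices of $G$ first appearing at step $i_j$. Since $|F_1|\ge k$, we get $\sum_{j\ge 2}\delta_j\le t$.
\item If $\delta_j=0$, then $t_{i_j}\ge 1$. Otherwise the $k$ old vertices of $\sigma_{i_j}$ would be exactly $F_j$. Then $F_j$ would be the required shared $(k-1)$-face, hence already in $Z_{i_j}$, contradicting the choice of $i_j$.
\item So each index $j\ge 2$ contributes at least one unit to $t+\sum_{j\ge 2}t_{i_j}\le R(Z)$. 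This gives $h-1\le R(Z)\le k-1$.
\end{enumerate}
This bound holds at every step, not just the last, so with it your induction closes.
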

\begin{proof}
By \Cref{lemColapseComponents} we may proceed to look only at $Z$.
For $Z$ with $R(Z)\le k-1$, denote each expansion operation in a growth process of $Z$ as $ Z_{i+1}=Z_i\cup _{C_i}\sigma_{i}, i\ge 0$.

It suffices to prove that every such $C_i$ has at most $k$ maximal faces. Indeed, in such a case \Cref{lem:gluingcollapse} will imply that $Z_{i+1}$ collapses onto $Z_i\cup T$, where $T$ is of \di \ at most $k-1$.
By \Cref{lemColapseComponents}, $T$ does not affect the collapsibility of $Z_i$, and so the argument will work for all $i$, proving $Z$ collapses to \di \ $k-1$.
 
 For the sake of contradiction, assume that there exists $I$ such that $C_I$ has $k+1$ or more maximal faces of \di\ greater or equal $k-1$. Each maximal face of $C_I$ is contained in at least one maximal face of $Z_I$. Denote the first such face for each maximal face of $C_I$ by $\sigma_{i_j}, j\ge 1$, where  $i_1 < i_2 < \dots$ .

 Recall that $R(Z_{i+1})=R(Z_i)+|C_i|-k$, and $R(Z_0)=0$. Consider the expansion operations $Z_{i_j}\mapsto Z_{i_j+1}, j\ge 2$. Denote $\delta_j=|C_I\cap Z_{i_j+1}|-|C_I\cap Z_{i_j}|$.
 Then $|C_I|-k\ge \sum_{j\ge 2}\delta_j$ (as $|C_I\cap \sigma_{i_1}|\ge k$ by assumption), and therefore $R(Z_{I+1})\ge R(Z_I)+\sum_j\delta_j$.

Let $\epsilon_j = R(Z_{i_j+1}) - R(Z_{i_j})$ denote the redundancy added at step $i_j$. If the expansion $Z_{i_J} \mapsto Z_{i_J+1}$ does not add any new vertices to $C_I$ (meaning $\delta_J = 0$), then the intersection $\sigma_{i_J} \cap C_I \cap \prs{\bigcup^{j<J} \sigma_{i_j}}$ must consist of at least two maximal faces (otherwise the intersection is an existing face of $ C_I\cap \prs{\bigcup^{j<J} \sigma_{i_j}}$), therefore $R(Z_{i_J+1})-R(Z_{i_J})=\epsilon_J\ge 1$.

 All in all, $R(Z)\ge R(Z_{I+1})\ge \sum_{j=2}(\delta_j+\epsilon_j)\ge k$, contradicting our assumption.
 Thus $C_i$ has at most $k$ maximal faces, as desired.
 \end{proof}

\begin{remark}
    The bound in \Cref{lem:redunColapse} is tight. This is easily seen by plugging $\partial(\Delta^{k+1})$ into \Cref{eqDefRedun}.
\end{remark}
\subsection{Proof of \Cref{thm:uppercollapse}}
We now have set the table to prove \Cref{thm:uppercollapse}.
\begin{proof}
    By \Cref{lemColapseComponents}, collapsibility to a certain \di \ is determined by strong connectivity components w.r.t the \di\ above it. Since strong connectivity components are always honest subcomplexes, it will be enough to prove that all honest subcomplexes of $\overline{X}$ strongly connected w.r.t \di \ $l+1$ collapse to \di \ $l$.

    In \Cref{lem:upperBudgetCost} we saw that an expansion operation w.r.t \di \ $l+1$ has an honest cost of at least $-e_{l+1}$, and we start with a budget of less than $l+1$. 
    From this we can deduce by arguments identical to those of \Cref{finIsoTypes} that only a finite number of isomorphism types of complexes strongly connected w.r.t \di \ $l+1$ may a.a.s appear as honest subcomplexes of $\overline{X}$, and those have less than $l+2+\frac{l+1}{|e_{l+1}|}$ vertices.\par
     By \Cref{lem:upperBudgetCost} we may also exclude all strong connectivity components with redundancy greater or equal to $l+1$, as those would cost more than the budget.
    But by \Cref{lem:redunColapse}, a component strongly connected w.r.t \di \ $l+1$ with redundancy of at most $l$ collapses onto \di \ $l$, finishing the proof.     
\end{proof}

\begin{corollary}\label{cor:wedgeSpheres}
    Under the conditions of \Cref{thm:uppercollapse} and assuming $\beta\not \in \N$, the integral homology of $\overline{X}$ is concentrated in \di\ $l$ (by \Cref{FNResults}) and is free.
    If $l>2$, $\overline{X}$ is also simply connected, and consequently
    is homotopic to a wedge of spheres of \di\ $l$ (this is a folklore result in algebraic topology, proven by combining the Hurewicz and Whitehead theorems). 
\end{corollary}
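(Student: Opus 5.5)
The plan is to prove the three assertions of \Cref{cor:wedgeSpheres} one at a time, in the order stated.

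\emph{Homology concentrated in \di\ $l$ and free.} By \Cref{thm:uppercollapse}, $\overline{X}$ a.a.s collapses onto a complex $Y$ of \di\ at most $l$. Since collapses are homotopy equivalences, $H_k(\overline{X};\Z)=0$ for $k>l$. By \Cref{FNResults} (which needs $\beta\notin\Z$), a.a.s $\overline{X}$ has a full $(l-1)$-skeleton and $H_i(\overline{X};\Z)=0$ for $0<i<l$. Hence the only possibly nonzero reduced homology is $H_l(Y;\Z)$. Because $Y$ has no $(l+1)$-faces, $H_l(Y;\Z)=\ker(\partial_l)\subset C_l(Y;\Z)$, which is a subgroup of a free abelian group and therefore free. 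Intersecting finitely many a.a.s events keeps the statement a.a.s.

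\emph{Simple connectivity for $l>2$.} Here $l\ge 3$, so the full $(l-1)$-skeleton contains the full $2$-skeleton of the simplex on $[n]$. That $2$-skeleton is simply connected, and attaching cells of \di\ $\ge 3$ does not change $\pi_1$. Hence $\pi_1(\overline{X})=\pi_1(Y)=0$. In fact $l\ge 2$ would already suffice for this step, but we keep the stated hypothesis.

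\emph{Wedge of spheres.} $Y$ is a simply connected CW complex of \di\ $l$ with $\tilde H_i(Y)=0$ for $i\ne l$ and $H_l(Y)\cong\Z^N$ free. By Hurewicz, $Y$ is $(l-1)$-connected and $\pi_l(Y)\cong H_l(Y)\cong\Z^N$, so choose maps $S^l\to Y$ representing a basis. The induced map $\bigvee_N S^l\to Y$ is an isomorphism on $H_l$, and both sides have vanishing homology elsewhere. Both are simply connected, so Whitehead's theorem (homology version) makes it a homotopy equivalence. Alternatively, since $Y$ has a full $(l-1)$-skeleton, one can collapse a spanning tree of $(l-1)$-cells by cellular homotopy to reach a complex with a single $0$-cell and only $l$-cells attached to a contractible $(l-1)$-skeleton.

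The only delicate point is bookkeeping: making sure the full skeleton from \Cref{FNResults} survives the collapse. We sidestep this by working with homotopy invariants of $\overline{X}$ itself and using $Y$ only for \di\ bounds.
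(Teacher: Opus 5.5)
Your proof is correct and takes the same route the paper indicates. Vanishing above $l$ comes from the collapse in \Cref{thm:uppercollapse}, vanishing below $l$ from \Cref{FNResults}, and freeness from $H_l$ of an $l$-dimensional complex being its cycle group; then $\pi_1=0$ from the full $2$-skeleton when $l\ge 3$, and Hurewicz plus Whitehead finish it.

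Two of your asides are wrong and should be dropped. First, the claim that $l\ge 2$ already suffices for simple connectivity is false. For $l=2$ the full $(l-1)$-skeleton is only the complete graph, and $H_1=0$ does not force $\pi_1=0$. Concretely, $p_1=1$ and $p_2=n^{-\alpha}$ with $\tfrac12<\alpha<1$ give $\overline{X}=Y_2(n,p_2)$ with $\beta=3-\alpha\notin\N$ and $l=2$. By Babson--Hoffman--Kahle its fundamental group is a.a.s.\ nontrivial, so the hypothesis $l>2$ is genuinely needed. Second, your \emph{alternatively} sketch is unsound as written. The full $(l-1)$-skeleton on $n$ vertices is a wedge of $(l-1)$-spheres, not contractible, and that sketch also assumes $Y$ keeps this skeleton, which you deliberately chose not to track.
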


\section{Open Problems}\label{discus}

\begin{itemize}
    \item As we mentioned after the statement of \Cref{thm:uppercollapse}, the upper model, after the collapse, appears to be similar to the Linial-Meshulam model with intersecting faces being positively correlated.
    It might be interesting to investigate in $\overline{X}$ various phenomena prominent in the study of $Y_d(n,p)$, such as the shadow or expansion.
    \item The point above also suggests an approach to an underexplored area in random complexes- complexes that are symmetric (such as all models mentioned in this paper), but are not built from independent parts. 
    This calls to mind things like the Ising model or its generalizations- the Random Cluster Model of random graphs (see \cite{FortuinKasteleyn72}), or the Plaquette Random-Cluster Model in cubical complexes (see \cite{DuncanSchweinhart25}). In these models, however, the probabilities are assigned based on topological properties of the entire complex, rather than individual correlations.
    \item  \Cref{cor:wedgeSpheres} does not apply when $\beta\in \N$. This is where the complex "transitions" between the regime where it has complete ($l-2$)-skeleton to one where it has complete $(l-1)$-skeleton, and so might have non-trivial homologies in two \di s simultaneously. 
    This, as always, begs the question of whether it is a wedge of spheres in that critical case as well.
    \item In the lower model we did not address the case where $S_2^d=d+2$ for some $d$, which is also unaddressed in Fowler's \Cref{thmFowler}. 
    It is clear that the answer in this case will depend on lower order terms, since if $E(\partial(\Delta^{d+1}))\to \infty$ and $p_{d+1}$ is sufficiently small, there will be simplex boundaries generating homology and preventing the collapse, while $E(\partial(\Delta^{d+1}))\to 0$ enables us to use the present proof as is.
    The case where $E(\partial(\Delta^{d+1}))$ tends to a positive constant would also depend on whether $p_{d+1}=1$ or not, giving outcomes with probabilities not necessarily tending to $0$ or $1$ in the latter case.
    \item \Cref{lem:redunColapse} should be useful in other questions concerning collapsibility. The author also believes that these are not the only possible uses of redundancy. Certainly, a complex with almost maximal redundancy is close to the complete skeleton of a simplex, but what is the minimal redundancy for which a complex cannot be collapsible, or have non-trivial homology? If these are different, can we characterize the complexes "in between", or construct a model generating them?
\end{itemize}

\printbibliography
\end{document}